\def\rr{{\mathbb R}}
\def\rn{{{\rr}^n}}
\def\nn{{\mathbb N}}
\def\fz{\infty}
\def\cs{{\mathcal S}}
\def\az{\alpha}
\renewcommand\hat{\widehat}
\def\vz{\varphi}
\def\wz{\widetilde}
\def\hs{\hspace{0.3cm}}
\def\r{\right}
\def\lf{\left}
\def\bint{{\ifinner\rlap{\bf\kern.30em--}
\int\else\rlap{\bf\kern.35em--}\int\fi}\ignorespaces}
\def\sbint{{\ifinner\rlap{\bf\kern.32em--}
\hspace{0.078cm}\int\else\rlap{\bf\kern.45em--}\int\fi}\ignorespaces}
\newtheorem{theorem}{Theorem}[section]
\newtheorem{lemma}[theorem]{Lemma}
\newtheorem{proposition}[theorem]{Proposition}
\theoremstyle{definition}
\newtheorem{remark}[theorem]{Remark}
\newtheorem{definition}[theorem]{Definition}
\numberwithin{equation}{section}
\numberwithin{equation}{section}
\numberwithin{equation}{section}
\begin{document}

\arraycolsep=1pt

\title{\Large\bf Maximal Function Characterizations of Hardy Spaces on ${\mathbb{R}}^{n}$ with Pointwise Variable Anisotropy \footnotetext{\hspace{-0.35cm} {\it 2010
Mathematics Subject Classification}. {42B30, 42B25.}
\endgraf{\it Key words and phrases.} anisotropy, Hardy space, continuous ellipsoid cover,  maximal function.
\endgraf This project is supported by the National Natural Science
Foundation of China (Grant Nos. 11861062  \& 11561065) and the Natural Science Foundation of Xinjiang
Uygur Autonomous Region (Grant Nos. 2019D01C049, 62008031 \& 042312023).
\endgraf $^\ast$\,Corresponding author.
}}
\author{Aiting Wang, Wenhua Wang, Xinping Wang and Baode Li$^\ast$}
\date{ }
\maketitle

\vspace{-0.8cm}

\begin{center}
\begin{minipage}{13cm}\small
{\noindent{\bf Abstract} \quad
In 2011, Dekel et al. developed highly geometric Hardy spaces $H^p(\Theta)$, for the full range $0<p\leq 1$, which are constructed by continuous multi-level ellipsoid covers $\Theta$ of $\mathbb{R}^n$ with high anisotropy in the sense that the ellipsoids can change shape rapidly from point to point and from level to level. In this article,  the authors further
obtain some real-variable characterizations of $H^p(\Theta)$ in terms of the radial, the non-tangential and the tangential maximal functions, which generalize the known results on the anisotropic Hardy spaces of Bownik.}
\end{minipage}
\end{center}


\section{Introduction\label{s1}}
As a generalization of the classical isotropic Hardy spaces $H^{p}(\mathbb{R}^n)$ \cite{fe}, anisotropic Hardy spaces $H_{A}^{p}(\mathbb{R}^n)$ were introduced and investigated by Bownik \cite{b} in 2003. These spaces were defined on $\rn$ associated with a fixed expansive matrix which act on ellipsoid instead of the Euclidean balls. In \cite{bb,bd,blyz,h,w,zl}, many authors also studied Bownik's anisotropic Hardy spaces. In 2011, Dekel et al. further \cite{dpw} generalized Bownik's spaces by constructing
Hardy spaces with pointwise variable anisotropy $H^p(\Theta),\,0<p\le 1$, associated with an ellipsoid cover $\Theta$.
The anisotropy in Bownik's Hardy spaces is the same one in each point in $\rn$, while the anisotropy in $H^p(\Theta)$ can change rapidly from point to point and from level to level. Moreover, the ellipsoid cover $\Theta$ is a very general setting which includes the classical isotropic setting, non-isotropic setting of Calder\'on and Torchinsky \cite{CT} and the anisotropic
setting of Bownik \cite{b} as special cases, see more details in \cite[pp.\,2-3]{b}
and \cite[p.\,157]{ddp}.

On the other hand, maximal function characterizations are very fundamental characterizations of Hardy spaces and they are crucial to conveniently apply the
real-variable theory of Hardy spaces $H^{p}(\mathbb{R}^n)$ with $p\in(0,\,1]$. Maximal functions characterizations was first shown for the classical isotropic Hardy spaces $H^{p}(\mathbb{R}^n)$ by Fefferman and Stein in their fundamental work \cite{fe}, \cite[Chapter III]{s}. Analogous results were shown by Calder$\acute{\text{o}}$n and Torchinsky \cite{CT, ct} for parabolic $H^{p}$ spaces and Uchiyama \cite{u} for $H^{p}$ on the space of homogeneous type. In 2003, Bownik \cite[p.42]{b} obtained the maximal function characterizations of the anisotropic Hardy space $H_{A}^{p}(\mathbb{R}^n)$.
Motivated by the above mentioned facts, a natural question arises: Do anisotropic Hardy spaces $H^p(\Theta)$ have maximal function characterizations\,$?$ In this article, we shall answer the problem affirmatively.

This article is organized as follows.

In Section \ref{s2}, we recall some notation and definitions concerning
anisotropic continuous ellipsoid cover $\Theta$, several maximal functions, anisotropic Hardy spaces $H^p(\Theta)$ defined via the grand radial maximal function. We also give some propositions about $H^p(\Theta)$, several classes of variable anisotropic maximal functions and Schwartz functions since they provide tools for further work. In Section 3, we first state main result: if ellipsoid cover $\Theta$ is pointwise continuous (see Definition \ref{d2.1}), we may obtain
some real-variable characterizations of $H^p(\Theta)$ in terms of the radial, the non-tangential and the tangential maximal functions (see Theorem \ref{t4.1}). Then we present several lemmas which are isotropic extensions in the setting of variable anisotropy and finally we show the proof of main result.

In the process of the proof of main result, the proof methods is from Stein \cite{fe} and Bownik \cite{b}. However, it is worth pointing out that the
ellipsoids of Bownik changes with the level $k\in \mathbb{Z}$,
whereas in our case the ellipsoids of Dekel changes with not only the the level $t\in \mathbb{R}$ but also with the center of the ellipsoid $x\in \mathbb{R}^n$. This makes the proof complicated and needs many subtle estimates such as Propositions \ref{l4.4}, \ref{l4.5} and Lemma \ref{l4.6}.

Finally, we make some conventions on notation.
Let $\mathbb{N}_0:=\{0,\,1,\, 2,\,\ldots\}$.
For any $\az:=(\az_1,\ldots,\az_n)\in\nn^n_0$,
$|\az|:=\az_1+\cdots+\az_n$ and $\partial^\az:=
(\frac{\partial}{\partial x_1})^{\az_1}\cdots
(\frac{\partial}{\partial x_n})^{\az_n}$.
Throughout the whole paper, we denote by $C$ a {positive
constant} which is independent of the main parameters, but it may vary from line to line. For any sets $E,\,F \subset \rn$, we use $E^\complement$ to denote the set $\rn\setminus E$. If there are no special instructions, any space $\mathcal{X}(\rn)$ is denoted simply by $\mathcal{X}$. Denote by $\cs$ the space of all Schwartz functions, $\cs'$ the space of all tempered distributions.




\section{Preliminary and Some Basic Propositions\label{s2}}
In this section, we first recall the notion of continuous ellipsoid covers $\Theta$ and we introduce the pointwise continuity for $\Theta$. An {\it ellipsoid} $\xi$ in $\rn$ is an image of the Euclidean unit ball $\mathbb{B}^{n}:=\{x\in\rn: |x|<1\}$ under an {affine} {transform}, i.e.
$$\xi:=M_\xi(\mathbb{B}^{n})+c_\xi,$$ where $M_\xi$ is a nonsingular matrix and $c_\xi \in \mathbb{R}^{n}$ is the center.

Let us begin with the definition of continuous ellipsoid covers, which was introduced in \cite[Definition 2.4]{ddp}.

\begin{definition}\label{d2.1}
We say that
$$\Theta:=\{ \theta(x,\,t): x\in\rn,t\in\mathbb{R}\}$$
is a {\it continuous ellipsoid cover} of $\rn$, or shortly an {\it ellipsoid cover},
if there exist positive constants $p(\Theta):=\{a_1,\ldots, a_6\}$, such that:
\begin{itemize}
\item[(i)]
For every $x\in \rn$ and $t\in \mathbb{R}$, there exists an ellipsoid $\theta(x,\,t):=M_{x,\,t}(\mathbb{B}^{n})+x$ satisfying
\begin{equation}\label{e2.1}
a_12^{-t}\leq|\theta(x,\,t)|\leq a_2 2^{-t}.
\end{equation}
\item[(ii)]
Intersecting ellipsoids from $\Theta$ satisfy a ``shape condition'', i.e., for any $x,\,y\in \rn$, $t\in \mathbb{R}$ and $s\ge0$, if $\theta(x,\,t)\cap \theta({y,\,t+s})\ne\emptyset $, then

\begin{equation}\label{e2.2}
a_3 2^{-a_4 s}\le \frac{1}{\| (M_{y,\,t+s})^{-1} M_{x,\,t}\|}
\le \|(M_{x,\,t})^{-1} M_{y,\,t+s}\|
\le a_5 2^{-a_6 s}.
\end{equation}
\end{itemize}
Here, $\|\cdot \|$ is the matrix norm given by $\|M\|:=\max_{|x|=1}|Mx|$ for an $n\times n$ real matrix $M$.

The word {\it continuous} refers to the fact that ellipsoids $\theta_{x,t}$ are defined for all values of $x\in \rn$ and $t\in \mathbb{R}$. And we say that a continuous ellipsoid cover
$\Theta$ is {\it pointwise continuous} if for every $t\in \mathbb{R}$, the matrix valued function $x \mapsto M_{x,t}$ is continuous. That is,
\begin{align}\label{ef2.3}
\|M_{x',t}-M_{x,t}\|\rightarrow 0 \ \ \text{as} \ \ x'\rightarrow x.
\end{align}
\end{definition}

\begin{remark}\label{r2.2s}
By \cite[Theorem 2.2]{bll}, we know that the pointwise continuous assumption is not necessary since it is always possible to construct an equivalent ellipsoid cover
\[
\Xi:=\{ \xi_{x,\,t}: x\in\rn,t\in\mathbb{R}\}
\]
such that $\Xi$ is pointwise continuous and $\Xi$ is equivalent to $\Theta$. Here we say that two ellipsoid covers $\Theta$ and $\Xi$ are {\it equivalent} if there exists a constant $C>0$ such that for any  $x\in\rn$ and $t\in\mathbb {R}$, we have
$$\frac1{C} \xi_{x,t} \subset \theta_{x,t} \subset C \xi_{x,t}.$$
\end{remark}

Taking $M_{y,\,t+s}=M_{x,\,t}$ in \eqref{e2.2}, we have
\begin{align}\label{ef2.4}
a_{3}\leq1 \ \ \text{and}\ \ a_{5}\geq1.
\end{align}
More properties about ellipsoid covers, see \cite{ddp,dpw}.

 For any $N,\,{\widetilde{N}}\in\mathbb{N}_0$ with $N\le{\widetilde{N}}$, let
\begin{eqnarray*}
\cs_{N,\,{\widetilde{N}}}:=\lf\{\psi\in\cs:\ \lf\|\psi \r\|_{\mathcal{S}_{N,\,{\widetilde{N}}}}:=\max_{\az\in\nn^n_0,\,|\az|\le N}\sup_{y\in\rn}(1+|y|)^{{\widetilde{N}}}
|\partial^\az\psi(y)|\le 1\r\}
\end{eqnarray*}

Let $\Theta$ be an ellipsoid cover. For any $\varphi \in \mathcal{S}$, $x\in\rn$, $t\in\mathbb{R}$ and $\theta(x,\,t)=M_{x,\,t}(\mathbb{B}^{n})+x$, denote
$$\varphi_{x,\,t}(y):=\lf|{\rm det}(M^{-1}_{x,t})\r|\varphi(M^{-1}_{x,\,t}y),\ \ \ y\in\rn.$$

Now, we give the notions of anisotropic variants of the non-tangential, the grand non-tangential, the radial, the grand radial and the tangential maximal functions.

\begin{definition}\label{d3.2}
Let $f\in\mathcal{S'}$, $\varphi\in\mathcal{S}$ and $N,\,\wz{N}\in\mathbb{N}_0$ with $N\le\wz{N}$. We define the {\it non-tangential}, the {\it grand non-tangential}, the {\it radial}, the {\it rand radial} and the {\it tangential maximal functions}, respectively as
$$M_\varphi f(x):=\sup_{t\in\mathbb{R}}\sup_{y\in{\theta(x,\,t)}}|f\ast\varphi_{x,\,t}(y)|,\ \ \ x\in\rn,$$

$$M_{N,\,\wz{N}} f(x):=\sup_{\varphi\in\mathcal{S}_{N,\,\wz{N}}}M_\varphi f(x),\ \ \ x\in\rn. $$

$$M^0_\varphi f(x):=\sup_{t\in\mathbb{R}}|f\ast\varphi_{x,\,t}(x)|,\ \ \ x\in\rn,$$

$$M^0_{N,\,\wz{N}}f(x):=\sup_{\varphi \,\in\mathcal{S}_{N,\,\wz{N}}}M^\circ_\varphi f(x),\ \ \ x\in\rn.$$

$$T^N_\varphi f(x):=\sup_{t\in\mathbb{R}}\sup_{y\in{\mathbb{R}}^n}|f\ast\varphi_{x,\,t}(y)|\lf(1+\lf|M^{-1}_{x,\,t}(x-y)\r|\r)^{-N},\ \ \ x\in\rn.$$
\end{definition}

\begin{remark}\label{r3.5}
It is immediate that we have the following pointwise estimate among the radial, the non-tangential and the tangential maximal functions:
$$M^0_\varphi f(x)\leq M_\varphi f(x)\leq 2^{N}\,T^N_\varphi f(x),\ \ \ x\in\rn.$$
\end{remark}

Next, we recall the definition of Hardy spaces with pointwise variable anisotropy \cite[Definition 3.6]{dpw} via the grand radial maximal function.

Let $\Theta$ be an ellipsoid cover of $\rn$ with parameters $p(\Theta)=\{a_1,\cdots,a_6\}$
and $0<p\le 1$. We define $N_p(\Theta)$ as the minimal integer satisfying
\begin{equation}\label{e2.3}
N_p:=N_p(\Theta)>\frac{\max(1,a_4)n+1}{a_6p},
\end{equation}
and then $\wz{N}_p(\Theta)$ as the minimal integer satisfying
\begin{equation}\label{e2.4}
\wz{N}_p:=\wz{N}_p(\Theta)>\frac{a_4N_p(\Theta)+1}{a_6}.
\end{equation}

\begin{definition}\label{d3.7}
Let $\Theta$ be an ellipsoid cover and $0<p\le 1$. Define $M^{0}:=M^{0}_{N_p,\wz{N}_p}$ and the {\it anisotropic Hardy space} is defined as
$$H_{N_p,\,\wz{N}_p}^p(\Theta):=\{f\in\mathcal{S'}:\,M^{0} f\in L^p\}$$
with the (quasi-)norm $\|f\|_{H^p(\Theta)}:=\|M^{0} f\|_{L^{p}}$.
\end{definition}

\begin{remark}
By Remark \ref{r2.2s}, we know that for every continuous ellipsoid cover $\Theta$, it exists an equivalent pointwise continuous ellipsoid cover $\Xi$. This implies that their corresponding (quasi-)norms $\rho_\Theta(\cdot,\cdot)$ and $\rho_\Xi(\cdot,\cdot)$ are also equivalent and hence the corresponding Hardy spaces $H^p(\Theta)=H^p(\Xi)(0<p\leq 1)$ with equivalent (quasi-)norms (see \cite[Theorem 5.8]{dpw}). So here and hereafter we always consider $\Theta$ of $H^p(\Theta)$ to be a pointwise continuous ellipsoid cover.
\end{remark}

\begin{proposition}{\label{p2.7}}
Let $\Theta$ be an ellipsoid cover, $0<p\le 1\le q\le\fz$, $p<q$ and $l\geq N_p$ with $N_p$ as in \eqref{e2.3}. If $N\geq N_p$ and $\wz{N}\geq ({a_4N+1})/{a_6}$, then
 \begin{align*}
H_{N_p,\,\wz{N}_p}^p(\Theta)=H^p_{q,\,l}(\Theta)=H_{N,\,\wz{N}}^p(\Theta)
\end{align*}
with equivalent (quasi-)norms, where $H^p_{q,\,l}(\Theta)$ denotes  the atomic Hardy space with pointwise variable anisotropy; see \cite[Definition 4.2]{dpw}.
\end{proposition}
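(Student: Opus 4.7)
The plan is to prove the chain
\begin{align*}
H^p_{N_p,\widetilde N_p}(\Theta)\hookrightarrow H^p_{q,l}(\Theta)\hookrightarrow H^p_{N,\widetilde N}(\Theta)\hookrightarrow H^p_{N_p,\widetilde N_p}(\Theta),
\end{align*}
which forces equality with equivalent quasi-norms throughout. The last inclusion is immediate: under the assumptions $N\ge N_p$ and $\widetilde N\ge(a_4N+1)/a_6$ one has $\cs_{N,\widetilde N}\subseteq\cs_{N_p,\widetilde N_p}$ (larger indices carve out a smaller unit ball), whence the pointwise bound $M^0_{N,\widetilde N}f\le M^0_{N_p,\widetilde N_p}f$ and the corresponding quasi-norm estimate. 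The first inclusion, $H^p_{N_p,\widetilde N_p}(\Theta)\hookrightarrow H^p_{q,l}(\Theta)$, is the atomic decomposition for the grand-radial-maximal-function Hardy space established in \cite[Theorem 5.8]{dpw}: every $f\in H^p_{N_p,\widetilde N_p}(\Theta)$ admits a representation $f=\sum_j\lambda_j a_j$ convergent in $\cs'$, where the $a_j$ are $(p,q,l)$-atoms and $\sum_j|\lambda_j|^p\ls\|f\|_{H^p(\Theta)}^p$; the definition of $H^p_{q,l}(\Theta)$ then yields the desired quasi-norm bound.

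The substantive step is the middle inclusion $H^p_{q,l}(\Theta)\hookrightarrow H^p_{N,\widetilde N}(\Theta)$, which by the $p$-subadditivity of $\|\cdot\|_{L^p}^p$ together with the $\cs'$-convergence of the atomic decomposition reduces to a uniform bound
\begin{align*}
\|M^0_{N,\widetilde N}a\|_{L^p}^p\le C
\end{align*}
for every $(p,q,l)$-atom $a$ supported in an ellipsoid $\theta(x_0,t_0)$. I would split the integral as $\int_{E}+\int_{E^{\complement}}$, where $E$ is a suitably enlarged dilate of $\theta(x_0,t_0)$ absorbing the shape distortion built into \eqref{e2.2}. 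On $E$, H\"older's inequality with exponent $q/p$, the $L^q$-boundedness of $M^0_{N,\widetilde N}$ (for which the lower bound on $\widetilde N$ guarantees integrability of $\varphi_{x,t}$ after the substitution $y\mapsto M_{x,t}^{-1}y$), and the size condition $\|a\|_{L^q}\le|\theta(x_0,t_0)|^{1/q-1/p}$ combine to produce a bound independent of $a$. On $E^{\complement}$, I would exploit the vanishing moments of $a$ up to order $l\ge N_p$: for $\psi\in\cs_{N,\widetilde N}$ and $t\in\mathbb{R}$, subtract from $\psi_{x,t}(x-\cdot)$ its degree-$l$ Taylor polynomial at $x_0$ before integrating against $a$; the remainder has size $\|M_{x,t}^{-1}\|^{l+1}$ times an anisotropic distance factor, and the resulting $L^p$ tail is integrable precisely because $N_p$ was chosen in \eqref{e2.3} to satisfy $a_6N_pp>\max(1,a_4)n+1$.

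The main technical obstacle is the pointwise variable geometry: the matrix $M_{x,t}$ depends on the evaluation point $x$, so to control $\varphi_{x,t}(x-\cdot)$ against an atom supported near $x_0$ one must compare $M_{x,t}$ with $M_{x_0,t_0}$ through a chain of overlapping ellipsoids. Iterating the shape condition \eqref{e2.2} along such a chain produces compounding factors of the form $2^{a_4s}$ and $2^{-a_6s}$ that have to be balanced against the anisotropic distance between $x$ and $x_0$; this is exactly the step at which the fine auxiliary estimates previewed in the introduction (Propositions~\ref{l4.4} and \ref{l4.5}, together with Lemma~\ref{l4.6}) become indispensable.
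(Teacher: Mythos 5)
Your chain does not close. The ``immediate'' last inclusion $H^p_{N,\widetilde N}(\Theta)\hookrightarrow H^p_{N_p,\widetilde N_p}(\Theta)$ is precisely the step your justification fails to deliver: from $\cs_{N,\widetilde N}\subseteq\cs_{N_p,\widetilde N_p}$ you correctly deduce the pointwise bound $M^0_{N,\widetilde N}f\le M^0_{N_p,\widetilde N_p}f$, but a sup over fewer test functions is smaller, so this bound embeds $H^p_{N_p,\widetilde N_p}(\Theta)$ into $H^p_{N,\widetilde N}(\Theta)$, not the other way around. What your argument actually establishes is $H^p_{N_p,\widetilde N_p}\hookrightarrow H^p_{q,l}\hookrightarrow H^p_{N,\widetilde N}$ together with the trivial $H^p_{N_p,\widetilde N_p}\hookrightarrow H^p_{N,\widetilde N}$; nothing brings $H^p_{N,\widetilde N}$ back into either of the other two spaces, so equality does not follow. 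The inclusion $H^p_{N,\widetilde N}\subseteq H^p_{N_p,\widetilde N_p}$ --- i.e., that the grand maximal function built from the smaller, more restrictive class $\cs_{N,\widetilde N}$ already controls the one built from $\cs_{N_p,\widetilde N_p}$ in $L^p$ --- is the genuinely nontrivial content of the proposition and cannot be obtained by monotonicity of suprema.

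The paper closes the cycle in the opposite orientation: $H^p_{q,l}\subseteq H^p_{N_p,\widetilde N_p}$ from \cite[Theorem 4.4]{dpw}, then the trivial $H^p_{N_p,\widetilde N_p}\subseteq H^p_{N,\widetilde N}$, and finally the hard step $H^p_{N,\widetilde N}\subseteq H^p_{\fz,l'}\subseteq H^p_{\fz,l}\subseteq H^p_{q,l}$ with $l'\geq\max(l,N)$, obtained by rerunning the atomic decomposition of \cite[Theorem 4.19]{dpw} for the space defined with the larger indices $N,\widetilde N$. Note also that your ``substantive'' middle step $H^p_{q,l}\hookrightarrow H^p_{N,\widetilde N}$, while correct, is the easy direction: it follows at once from \cite[Theorem 4.4]{dpw} composed with the pointwise inequality above, so the atom estimates you sketch are not where the work lies. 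To repair the argument, replace your last arrow by a proof that every $f$ with $M^0_{N,\widetilde N}f\in L^p$ admits a $(p,\fz,l')$-atomic decomposition with the corresponding quasi-norm control; this is the adaptation of \cite[Theorem 4.19]{dpw} that the paper invokes, and it is the only step that cannot be reduced to results already quoted.
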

\begin{proof}
This proposition is a corollary of \cite[Theorems 4.4 and 4.19]{dpw}. Indeed, by Definition \ref{d3.7}, we obtain that, for any $N\geq N_p$ and $\wz{N}\geq ({a_4N+1})/{a_6}$,
$$H_{N_p,\,\wz{N}_p}^p(\Theta)\subseteq H_{N,\,\wz{N}}^p(\Theta).$$
Combining this and $H^p_{q,\,l}(\Theta)\subseteq H_{N_p,\,\wz{N}_p}^p(\Theta)$ (see \cite[Theorem 4.4]{dpw}), we obtain
\begin{align}\label{ea2.5}
H^p_{q,\,l}(\Theta)\subseteq H_{N,\,\wz{N}}^p(\Theta).
\end{align}
By checking the definition of anisotropic $(p,q,l)$-atom (see \cite[Definition 4.1]{dpw}), we know that every $(p,\,\fz,\,l)$-atom is also a $(p,\,q,\,l)$-atom and hence
$$H^p_{\fz,\,l}(\Theta)\subseteq H^p_{q,\,l}(\Theta).$$
Let $l'\geq \max(l,\,{N})$. By a similar argument to the proof of \cite[Theorem 4.19]{dpw}, we obtain
\begin{align*}
H_{N,\,\wz{N}}^p(\Theta)\subseteq H^p_{\fz,\,l'}(\Theta),
\end{align*}
where $N\geq N_p$ and $\wz{N}\geq ({a_4N+1})/{a_6}$.
Thus,
\begin{align}\label{ea2.6}
H_{N,\,\wz{N}}^p(\Theta)\subseteq H^p_{\fz,\,l'}(\Theta)\subseteq H^p_{\fz,\,l}(\Theta)\subseteq H^p_{q,\,l}(\Theta).
\end{align}
Combining \eqref{ea2.5} and \eqref{ea2.6}, we conclude that
\begin{align*}
H_{N_p,\,\wz{N}_p}^p(\Theta)=H^p_{q,\,l}(\Theta)=H_{N,\,\wz{N}}^p(\Theta)
\end{align*}
with equivalent (quasi-)norms.
\end{proof}

\begin{remark}\label{r2.4}
From Proposition \ref{p2.7},
we deduce that, for any integers $N\geq N_p$ and $\wz{N}\geq ({a_4N+1})/{a_6}$, the definition of
$H_{N,\,\wz{N}}^p(\Theta)$ is independent of $N$ and $\wz{N}$.
Therefore, from now on, we denote $H_{N,\,\wz{N}}^p(\Theta)$ with $N\geq N_p$ and $\wz{N}\geq ({a_4N+1})/{a_6}$ simply by $H^p(\Theta)$.
\end{remark}
\begin{proposition}\label{l2.2}{\rm\cite[Lemma 2.3]{dpw}}
 Let $\Theta$ be an ellipsoid cover. Then there exists a constant $J:=J(p(\Theta))\geq1$ such that for any $x\in\rn$ and $t\in\mathbb{R}$,
 $$ 2M_{x,\,t}(\mathbb{B})+x\subset\theta(x,t-J).$$
Here and hereafter, let $J$ always be as in Proposition \ref{l2.2}.
\end{proposition}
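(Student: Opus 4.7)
The plan is to reduce the claimed inclusion to a matrix-norm inequality between $M_{x,t-J}$ and $M_{x,t}$, and then read off that inequality from the ``shape condition'' \eqref{e2.2} applied with a common center.

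First I would note that, writing $\theta(x,t) = M_{x,t}(\mathbb{B}^n) + x$ and $\theta(x,t-J) = M_{x,t-J}(\mathbb{B}^n) + x$, the desired inclusion $2M_{x,t}(\mathbb{B}^n)+x \subset \theta(x,t-J)$ is the same as
\[
2(M_{x,t-J})^{-1} M_{x,t}\,(\mathbb{B}^n) \subset \mathbb{B}^n,
\]
which, by the definition of the operator norm, is equivalent to
\[
\bigl\|(M_{x,t-J})^{-1} M_{x,t}\bigr\| \le \tfrac{1}{2}.
\]
So the whole statement comes down to producing $J \ge 1$ for which this bound holds uniformly in $x$ and $t$.

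Next I would invoke the shape condition \eqref{e2.2} with both centers taken equal to $x$, base level $t-J$, and scale difference $s = J \ge 0$. Since $x \in \theta(x,t-J) \cap \theta(x,t)$, the intersection is nonempty, so \eqref{e2.2} applies and yields in particular
\[
\bigl\|(M_{x,t-J})^{-1} M_{x,t}\bigr\| \le a_5\, 2^{-a_6 J}.
\]
Combining the two displays, it suffices to choose $J$ so that $a_5 2^{-a_6 J} \le 1/2$, i.e.
\[
J \ge \max\!\left\{1,\ \left\lceil \tfrac{1 + \log_2 a_5}{a_6}\right\rceil\right\},
\]
which uses only the parameters $p(\Theta)=\{a_1,\ldots,a_6\}$ and is therefore independent of $x$ and $t$. (Note that \eqref{ef2.4} gives $a_5 \ge 1$, so the logarithm is nonnegative and $J \ge 1$ is automatic for the second bound; we just take the maximum with $1$ to be safe.)

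The argument is essentially a one-line consequence of \eqref{e2.2}, so there is no real obstacle. The only point requiring a moment's care is that the shape condition is stated with $s \ge 0$, which is why I set the base level to $t-J$ (rather than $t$) and let $s = J$; with the reversed choice one would need to invert the inequality, which is fine but slightly awkward.
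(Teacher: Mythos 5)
Your proof is correct and is essentially the standard argument for this fact (the paper itself only cites \cite[Lemma 2.3]{dpw} without reproducing a proof): reduce the inclusion to the operator-norm bound $\|(M_{x,t-J})^{-1}M_{x,t}\|\le 1/2$ and obtain it from the shape condition \eqref{e2.2} applied with common center $x$, base level $t-J$ and $s=J$, then choose $J\ge(1+\log_2 a_5)/a_6$. Your care about the direction of $s\ge 0$ in \eqref{e2.2} is exactly the right point to check, and the resulting $J$ depends only on $p(\Theta)$ as required.
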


\begin{definition}\label{d3.6}{\rm\cite[Definition 3.1]{dpw}}
Let $\Theta$ be an ellipsoid cover. For any locally integrable function $f$, \emph{maximal function of Hardy-Littlewood type} of $f$ is defined by
$$M_{\Theta}f(x):=\sup_{t\in\mathbb{R}}\frac{1}{|\theta(x,t)|}\int_{\theta(x,t)}|f(y)|\,dy,\ \ \ x\in\rn.$$
\end{definition}

\begin{proposition}\label{l4.3}{\rm\cite[Theorem 3.3]{dpw}}
Let $\Theta$ be an ellipsoid cover. Then
\begin{enumerate}
\item[(i)] there exists a constant $C$ depending only on $p(\Theta)$ and $n$ such that for all $f\in\,L^{1}$ and $\alpha>0$,
 \begin{align}\label{e4.2}
|\{x:\,M_{\Theta}f(x)>\alpha\}|\leq C {\alpha}^{-1}\|f\|_{L^{1}};
\end{align}
\item[(ii)] for $1<p< \fz$ there exists a constant $C_p$ depending only on $C$ and $p$ such that for all $f\in\,L^{p}$,
 \begin{align}\label{e4.3}
\|M_{\Theta}f\|_{L^{p}}\leq C_{p} \|f\|_{L^{p}}.
\end{align}
\end{enumerate}
\end{proposition}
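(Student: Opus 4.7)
The plan is to follow the classical Hardy--Littlewood maximal function strategy adapted to the ellipsoid cover setting. For part (ii), once (i) is proved it suffices to combine the weak $(1,1)$ bound with the trivial estimate $\|M_\Theta f\|_{L^\infty}\leq\|f\|_{L^\infty}$ (which follows because $|\theta(x,t)|^{-1}\int_{\theta(x,t)}|f|\,dy\leq\|f\|_{L^\infty}$ for every $x\in\rn$ and $t\in\mathbb{R}$) and invoke the Marcinkiewicz interpolation theorem. So the real work is in establishing (i).

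The weak-type $(1,1)$ bound reduces, as in the Euclidean case, to a Vitali-type covering lemma for $\Theta$. The key geometric input I would first extract is an \emph{engulfment property}: there is a constant $K=K(p(\Theta))>0$ such that whenever $\theta(x,t)\cap\theta(y,t+s)\neq\emptyset$ with $s\geq 0$, one has
\[
\theta(y,t+s)\subseteq K\,\theta(x,t):=\{x+KM_{x,t}u:\,u\in\mathbb{B}^n\}.
\]
To prove this, pick any $w\in\theta(x,t)\cap\theta(y,t+s)$ and any $z\in\theta(y,t+s)$, and decompose
\[
M_{x,t}^{-1}(z-x)=M_{x,t}^{-1}M_{y,t+s}\bigl(M_{y,t+s}^{-1}(z-y)\bigr)+M_{x,t}^{-1}(y-w)+M_{x,t}^{-1}(w-x).
\]
Each of the first two summands has norm at most $\|M_{x,t}^{-1}M_{y,t+s}\|\leq a_5 2^{-a_6 s}\leq a_5$ by the third inequality of \eqref{e2.2} and $s\geq 0$, while the last has norm $<1$ because $w\in\theta(x,t)$. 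Hence $|M_{x,t}^{-1}(z-x)|<2a_5+1$, so $K:=2a_5+1$ works.

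With engulfment in hand, (i) follows from a standard greedy selection. Given $\alpha>0$ and a compact $E\subset\{x\in\rn:M_\Theta f(x)>\alpha\}$, for each $x\in E$ choose a level $t_x\in\mathbb{R}$ with $|\theta(x,t_x)|^{-1}\int_{\theta(x,t_x)}|f(y)|\,dy>\alpha$; then $|\theta(x,t_x)|<\alpha^{-1}\|f\|_{L^1}$ together with \eqref{e2.1} gives a uniform lower bound on $t_x$. Iteratively pick the ellipsoid with (approximately) smallest $t_{x_i}$ and discard any subsequent candidate that meets a previously selected one; engulfment then yields $E\subseteq\bigcup_i K\theta(x_i,t_{x_i})$ for the selected disjoint family, so
\[
|E|\leq K^n\sum_i|\theta(x_i,t_{x_i})|<K^n\alpha^{-1}\sum_i\int_{\theta(x_i,t_{x_i})}|f(y)|\,dy\leq K^n\alpha^{-1}\|f\|_{L^1}.
\]
Taking the supremum over such compact $E$ proves (i), and then (ii) is immediate by interpolation. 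The main obstacle is securing the engulfment constant uniformly in all four parameters $(x,y,t,s)$, which is precisely what the shape condition \eqref{e2.2} is formulated to supply; after that, the argument is essentially the classical Vitali-covering one.
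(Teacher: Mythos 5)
The paper gives no proof of this proposition at all: it is quoted verbatim from \cite[Theorem 3.3]{dpw}, so there is nothing internal to compare against. Your argument is a correct, self-contained proof along exactly the expected lines: the engulfment estimate is the right geometric consequence of the shape condition \eqref{e2.2} (your decomposition of $M_{x,t}^{-1}(z-x)$ and the bound $\|M_{x,t}^{-1}M_{y,t+s}\|\leq a_5$ for $s\geq 0$ are correct, and $|K\theta(x,t)|=K^n|\theta(x,t)|$ since you dilate about the center), and the Vitali selection plus Marcinkiewicz interpolation with the trivial $L^\infty$ bound then give (i) and (ii). Two small refinements would tighten the write-up. First, since the ellipsoids $\theta(x,t_x)$ are open and cover the compact set $E$, you should pass to a finite subcover before running the greedy selection; this makes the selection exact rather than ``approximately smallest'' and avoids having to invoke \eqref{e2.2} for slightly negative $s$ (which is still available via the lower bound $a_32^{-a_4s}\leq \|(M_{y,t+s})^{-1}M_{x,t}\|^{-1}$, but is an extra case you would otherwise need to write out). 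Second, concluding $|\{M_\Theta f>\alpha\}|\leq C\alpha^{-1}\|f\|_{L^1}$ from the bound over compact subsets uses inner regularity and hence tacitly the measurability of the level set; for a general (not pointwise continuous) cover this deserves a word, e.g.\ by interpreting the left-hand side as outer measure or restricting the supremum in $M_\Theta$ to a countable dense set of levels. Neither point affects the substance of the argument.
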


We give some useful results about variable anisotropic maximal functions with different apertures. They also play important roles to obtain the maximal function characterizations of $H^p(\Theta)$. For any given $x\in \rn$, suppose that $F_{x}:\mathbb{R}^{n}\times\mathbb{R}\rightarrow{(0,\,\fz)}$
is a Lebesgue measurable function which satisfies that for any given $y\in \rn$ and $t\in \mathbb{R}$, $F_{x}(y,t)$ is continuous with respect to the variable $x$. For fixed $l\in \mathbb{Z}$ and $t_{0}<0$
define the maximal function of $F_x$ with aperture $l$ as
\begin{align}\label{e3.3x}
F_{l}^{*\,t_{0}}(x):=\sup_{t\geq t_{0}}\sup_{y\in\theta(x,\,t-{l}J)}F_{x}(y,t).
\end{align}
\begin{proposition}{\rm\label{p2.12}}
For any $l\in \mathbb{Z}$ and $t_{0}<0$, let $F_{l}^{*\,t_{0}}$ be as in \eqref{e3.3x}. If the ellipsoid cover $\Theta$ is pointwise continuous, then $F_{l}^{*\,t_{0}}: \mathbb{R}^n\rightarrow(0, \infty] $ is lower semicontinuous, i.e., $$\{x\in\rn: F_{l}^{*\,t_{0}}(x) > \lambda\} \ \ \text{is open for any}\ \ \lambda>0.$$
\end{proposition}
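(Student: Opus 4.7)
The plan is to establish openness directly: fix any $x_0$ with $F_l^{*\,t_0}(x_0) > \lambda$ and produce a neighborhood $U$ of $x_0$ on which $F_l^{*\,t_0}$ stays above $\lambda$. By the definition of the double supremum in \eqref{e3.3x}, there exist $t \geq t_0$ and $y \in \theta(x_0,\,t - lJ)$ with $F_{x_0}(y,t) > \lambda$. It then suffices to find a neighborhood $U$ of $x_0$ on which, simultaneously, (a) the point $y$ continues to lie in $\theta(x',\,t - lJ)$, and (b) the value $F_{x'}(y,t)$ remains $> \lambda$; for then $F_l^{*\,t_0}(x') \geq F_{x'}(y,t) > \lambda$ for every $x' \in U$.

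For step (b), the assumption on $F_x$ already provides the conclusion: since $F_x(y,t)$ is continuous in $x$ for the fixed pair $(y,t)$, there is a neighborhood $U_2$ of $x_0$ on which $F_{x'}(y,t) > \lambda$. So the actual work lies in (a), which is where the pointwise continuity hypothesis \eqref{ef2.3} comes in. Because $\mathbb{B}^{n}$ is the \emph{open} Euclidean unit ball, writing $z_0 := M_{x_0,\,t-lJ}^{-1}(y - x_0)$ yields $|z_0| < 1$ with a strict margin. For a perturbed basepoint $x'$, define
$$z(x') := M_{x',\,t-lJ}^{-1}(y - x').$$
By \eqref{ef2.3} we have $M_{x',\,t-lJ} \to M_{x_0,\,t-lJ}$ as $x' \to x_0$, and since every $M_{x,t}$ is nonsingular by Definition \ref{d2.1}, continuity of matrix inversion on $\mathrm{GL}_n(\mathbb{R})$ upgrades this to $M_{x',\,t-lJ}^{-1} \to M_{x_0,\,t-lJ}^{-1}$. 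Combined with $y - x' \to y - x_0$, this gives $z(x') \to z_0$, so there is a neighborhood $U_1$ of $x_0$ on which $|z(x')| < 1$; that is precisely the membership $y \in \theta(x',\,t - lJ)$.

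Setting $U := U_1 \cap U_2$ completes the argument: for every $x' \in U$, the pair $(t, y)$ is an admissible witness in the supremum defining $F_l^{*\,t_0}(x')$, and $F_{x'}(y,t) > \lambda$. The only delicate point is the passage from continuity of $M_{x,t}$ to continuity of $M_{x,t}^{-1}$, but this is automatic from the nonsingularity built into Definition \ref{d2.1}. Notably, neither the measure condition \eqref{e2.1}, the shape condition \eqref{e2.2}, nor the parameters $a_3,\,a_4,\,a_5,\,a_6$ play any role here; the lemma is really a soft topological statement whose entire content is the interaction between \eqref{ef2.3} and the openness of $\theta(x_0,\,t-lJ)$.
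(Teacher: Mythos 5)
Your proof is correct and follows essentially the same route as the paper's: fix a witness pair $(t,y)$ for the supremum, then use the continuity of $x\mapsto F_x(y,t)$ and the pointwise continuity of the cover to show $(t,y)$ remains an admissible witness with value above $\lambda$ for all $x'$ in a neighborhood. If anything, your treatment of step (a) is more complete than the paper's, which simply cites the continuity of $x\mapsto\theta(x,t)$ without spelling out the passage through $M_{x',t-lJ}^{-1}$ and the strict inequality $|z_0|<1$.
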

\begin{proof}
If $F_{l}^{*\,t_{0}}(x)>\lambda$ for some $x\in\rn$, then there exist $t\geq t_{0}$ and $y\in\theta(x,\,t-{l}J)$ so that $F_x(y,t)>\lambda$. Since $\theta({x,t})$ is continuous for variable $x$ (see Remark \ref{r2.2s}),  there exists $\delta_1>0$ so that for any $x'\in U(x,\delta_1):=\{z\in\rn:|z-x|<\delta_1\}$,
 $y\in \theta(x',\,t-{l}J)$.

 Besides, for $\mu:=(F_x(y,t)+\lambda)/(2\lambda)>1$, from $F_x(y,t)>\lambda$, it follows that
 $$F_x(y,t)>\mu\lambda.$$
 For any given $y\in \rn$ and $t\in \mathbb{R}$, since $F_{x}(y,t)$ is continuous for variable $x$, taking $\epsilon:=[1-(1/\mu)]F_x(y,t)$, there exists $\delta_2>0$ so that for any $x'\in U(x,\delta_2)$,
$$|F_{x}(y,t)-F_{x'}(y,t)|<\epsilon$$
and hence $F_{x'}(y,t)>(1/\mu)F_{x}(y,t)$, which together with $F_x(y,t)>\mu\lambda$ implies that
$$F_{x'}(y,t)>\lambda.$$

Taking $\delta:=\min\{\delta_1,\delta_2\}$, for any $x'\in U(x,\delta)$, we have
$$y\in \theta(x',\,t-{l}J)$$\ \ \text{and}\ \ \ $$F_{x'}(y,t)>\lambda,$$
which implies $F_{l}^{*\,t_{0}}(x')>\lambda$. Thus, $\{x\in\rn: F_{l}^{*\,t_{0}}(x) > \lambda\}$ is open.
\end{proof}

By Proposition \ref{p2.12}, we obtain that $\{x\in\rn: F_{l}^{*\,t_{0}}(x) > \lambda\}$ is Lebesgue measurable. Based on this and inspired by \cite[Lemma 7.2]{b}, the following Proposition \ref{l4.4} shows some estimates for maximal function $F_{l}^{*\,t_{0}}$.

\begin{proposition}\label{l4.4}
Let $K>0$, $\mathbb{B}_{K}:=\{y\in\rn:|y|<K\}$, $\Theta$ be an ellipsoid cover and let $F_{l}^{*\,t_{0}}$ and $F_{l'}^{*\,t_{0}}$ be as in \eqref{e3.3x} with
 integers $l>l'$ and $t_0<0$.  Then there exists a constant $C>0$ which depends on parameters $p(\Theta)$ such that for any functions $F_{l}^{*\,t_{0}}$, $F_{l'}^{*\,t_{0}}$ and $\lambda>0$, we have
\begin{align}\label{ea4.3}
\lf|\lf\{x\in \mathbb{B}_{K}:F_{l}^{*\,t_{0}}(x)>\lambda\r\}\r|\leq{C}2^{(l-{l'})J}\lf|\lf\{x\in \mathbb{B}_{K}:F_{l'}^{*\,t_{0}}(x)>\lambda\r\}\r|
\end{align}
and
\begin{align}\label{e4.4}
\int_{\mathbb{B}_{K}}F_{l}^{*\,t_{0}}(x)\,dx\leq {C}2^{(l-{l'})J}\int_{\mathbb{B}_{K}} F_{l'}^{*\,t_{0}}(x)\,dx.
\end{align}
\end{proposition}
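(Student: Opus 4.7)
The plan is to prove the weak-type inequality \eqref{ea4.3} first, then deduce the integral inequality \eqref{e4.4} by integrating in $\lambda$ through the layer-cake identity $\int_{\mathbb{B}_K} F_l^{*\,t_0}(x)\,dx = \int_0^\infty |\{x \in \mathbb{B}_K : F_l^{*\,t_0}(x) > \lambda\}|\,d\lambda$, whose measurability hypothesis is supplied by Proposition~\ref{p2.12}.

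To establish \eqref{ea4.3}, I would adapt the Vitali-covering strategy of Bownik \cite[Lemma 7.2]{b} to the present variable-anisotropy setting. Write $\Omega_l := \{x \in \mathbb{B}_K : F_l^{*\,t_0}(x) > \lambda\}$ and $\Omega_{l'} := \{x \in \mathbb{B}_K : F_{l'}^{*\,t_0}(x) > \lambda\}$, both open by Proposition~\ref{p2.12}. For each $x \in \Omega_l$ choose a witness $(y_x, t_x)$ with $t_x \geq t_0$, $y_x \in \theta(x, t_x - lJ)$, and $F_x(y_x, t_x) > \lambda$. Using Proposition~\ref{l2.2} (the doubling-type inclusion $2M_{x,t}\mathbb{B}^n + x \subset \theta(x, t-J)$), I would apply a Vitali-type selection to $\{\theta(x, t_x - lJ) : x \in \Omega_l\}$ and extract a disjoint subfamily $\{\theta(x_i, t_{x_i} - lJ)\}_i$ whose fixed enlargement still covers $\Omega_l$, namely $\Omega_l \subseteq \bigcup_i \theta(x_i, t_{x_i} - (l+C_1)J)$ for some integer $C_1 = C_1(p(\Theta))$.

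The geometric core is to show that, for a constant $C_0 = C_0(p(\Theta))$, whenever $z \in \theta(y_{x_i}, t_{x_i} - (l'-C_0)J)$ one has $y_{x_i} \in \theta(z, t_{x_i} - l'J)$, and hence $F_{l'}^{*\,t_0}(z) \geq F_z(y_{x_i}, t_{x_i})$. This follows from the shape condition \eqref{e2.2} applied to $\theta(z, t_{x_i} - l'J)$ and $\theta(y_{x_i}, t_{x_i} - (l'-C_0)J)$ (both containing $z$, so their intersection is nonempty), where $C_0$ is chosen large enough (depending on $a_3, a_5, a_6, J$) to absorb the factor $1/a_3$ arising when the base point is shifted. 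Combined with the volume comparison \eqref{e2.1}, which gives $|\theta(x_i, t_{x_i} - (l+C_1)J)| / |\theta(y_{x_i}, t_{x_i} - (l'-C_0)J)| \leq C \cdot 2^{(l-l')J}$, a summation over the disjoint family will produce \eqref{ea4.3}.

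The main obstacle is transferring the pointwise inequality $F_{x_i}(y_{x_i}, t_{x_i}) > \lambda$ into a lower bound $F_z(y_{x_i}, t_{x_i}) > \lambda$ for $z$ ranging over the ellipsoid $\theta(y_{x_i}, t_{x_i} - (l'-C_0)J)$, since the hypothesis only guarantees pointwise continuity of $F_x(y,t)$ in $x$ without any uniform modulus. I expect to handle this by the same device used in Proposition~\ref{p2.12}: replace $\lambda$ by $\mu\lambda$ at the witness for some $\mu>1$, then use pointwise continuity of $F_{\cdot}(y_{x_i}, t_{x_i})$ combined with a compactness/chaining argument on the closure of the relevant ellipsoid to produce a subset on which the strict inequality $F_z(y_{x_i}, t_{x_i}) > \lambda$ persists; the geometric loss from this compactness step is absorbed into the constants $C$, $C_0$, $C_1$, all depending only on $p(\Theta)$.
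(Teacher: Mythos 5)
Your overall route differs from the paper's. The paper runs no Vitali selection at all: setting $\Omega:=\{x\in \mathbb{B}_{K}:F_{l'}^{*\,t_{0}}(x)>\lambda\}$, it proves the pointwise inclusion $\{x\in \mathbb{B}_{K}:F_{l}^{*\,t_{0}}(x)>\lambda\}\subset\{x\in\rn:M_{\Theta}(\chi_{\Omega})(x)\geq C_{1}2^{(l'-l)J}\}$ by showing $a_{5}^{-1}\theta(y,t-l'J)\subseteq\theta(x,t-(l+1)J)\cap\Omega$ for the witness pair $(y,t)$, and then simply invokes the weak $(1,1)$ bound for $M_{\Theta}$ from Proposition \ref{l4.3}; the integration in $\lambda$ at the end is the same as yours. (Incidentally, Bownik's Lemma 7.2 proceeds the same way; it is a maximal-function reduction, not a Vitali argument.) Your geometric core --- placing an ellipsoid of volume $\gtrsim 2^{l'J-t}$ around the witness $y$ inside $\Omega$ and comparing volumes via \eqref{e2.1} and \eqref{e2.2} --- coincides with the paper's. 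But the covering step you substitute for the maximal-function reduction has a genuine problem: after extracting a disjoint subfamily $\{\theta(x_i,t_{x_i}-lJ)\}_i$, the subsets of $\Omega$ you produce, namely $\theta(y_{x_i},t_{x_i}-(l'-C_0)J)$, are not disjoint and need not have bounded overlap (disjointness of a family of ellipsoids does not imply bounded overlap of fixed enlargements when the scales $t_{x_i}$ vary), so you cannot sum their measures against $|\Omega|$. The weak $(1,1)$ inequality for $M_{\Theta}$ is precisely the device that absorbs these overlaps; once you supply it, you have reproduced the paper's proof.

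The step you single out as ``the main obstacle'' is indeed the delicate point, but your proposed resolution does not work. To conclude $F_{l'}^{*\,t_{0}}(z)>\lambda$ from $y\in\theta(z,t-l'J)$ one needs $F_{z}(y,t)>\lambda$, whereas the witness only provides $F_{x}(y,t)>\lambda$. Pointwise continuity of $x\mapsto F_{x}(y,t)$, with no uniform modulus, yields a neighborhood of $x$ of completely uncontrolled size on which the strict inequality persists; no compactness or chaining argument can upgrade this to a subset of measure comparable to $|\theta(y,t-l'J)|$ with constants depending only on $p(\Theta)$ --- any constant so obtained would necessarily depend on $F$ itself, which defeats the statement (the constant in \eqref{ea4.3} must be uniform over all admissible $F$). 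It is worth noting that the paper's own proof passes over this point silently: it deduces $F_{l'}^{*\,t_{0}}(z)>\lambda$ directly from $F_{x}(y,t)>\lambda$ and $y\in\theta(z,t-l'J)$, which is legitimate only if $F_{x}(y,t)$ is effectively independent of the base point $x$, as in Bownik's non-variable setting. So you have correctly isolated the step where the variable anisotropy genuinely bites, but closing it requires a quantitative comparison of $F_{z}(y,t)$ with $F_{x}(y,t)$ for nearby $x,z$ (verified in the concrete application), not mere continuity.
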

\begin{proof}
Let $\Omega:=\{x\in \mathbb{B}_{K}:F_{l'}^{*\,t_{0}}(x)>\lambda\}$. We claim that
 \begin{align}\label{e2.15x}
\lf\{x\in \mathbb{B}_{K}:F_{l}^{*\,t_{0}}(x)>\lambda\r\}\subset\lf\{x\in\rn:M_{\Theta}({\chi_{\Omega}})(x)\geq C_{1}2^{(l'-l)J}\r\},
\end{align}
where $C_1$ is a positive constant to be fixed later. Assuming the claim holds for the moment, from this and weak type (1,1) of $M_\Theta$ (see \eqref{e4.2}), we deduce
 \begin{align*}
\lf|\lf\{x\in \mathbb{B}_{K}:F_{l}^{*\,t_{0}}(x)>\lambda\r\}\r|&\leq\lf|\lf\{x\in\rn:M_{\Theta}({\chi_{\Omega}})(x)\geq C_{1}2^{(l'-l)J}\r\}\r|\\
&\leq C_{1}^{-1}2^{(l-l')J}\|\chi_{\Omega}\|_{L^{1}}\leq C2^{(l-l')J}|\Omega|
 \end{align*}
 and hence \eqref{ea4.3} holds true,  where $C:=1/C_{1}.$  Furthermore, integrating \eqref{ea4.3} on $(0,\,\fz)$ with respect to $\lambda$ yields \eqref{e4.4}. Therefore, it remains to show \eqref{e2.15x}.

Suppose $F_{l}^{*\,t_{0}}(x)>\lambda$ for some $x\in \mathbb{B}_{K}$. Then
there exist $t$ with $t\geq t_{0}$ and $y\in\theta(x,\,t-{l}J)$ such that $F_x(y,\,t)>\lambda$. For any $l,\,{l'}\in\mathbb{Z}$ and $l\geq{l'}$, we first prove the following holds true:
 \begin{align}\label{e2.15}
 {a_{5}}^{-1}\,\theta(y,\,t-{l'}J)\subseteq \theta(x,\,t-(l+1)J)\cap\Omega.
 \end{align}
For any $z\in{a_{5}}^{-1}\,\theta(y,\,t-{l'}J)$, by \eqref{ef2.4}, we have
$z\in \theta(y,\,t-{l'}J)$
and hence $$\theta(z,\,t-{l'}J)\cap \theta(y,\,t-{l'}J)\neq \emptyset.$$
Thus, by \eqref{e2.2}, we have
$$\lf\|M_{z,\,t-l'J}^{-1}\,M_{y,\,t-l'J}\r\|\leq a_{5}.$$
From this, it follows that
$${a_{5}}^{-1}M_{z,\,t-l'J}^{-1}\,M_{y,\,t-l'J}(\mathbb{B}^{n})\subseteq \mathbb{B}^{n}$$
and hence
 \begin{align*}
{a_{5}}^{-1}\,M_{y,\,t-l'J}(\mathbb{B}^{n})\subseteq M_{z,\,t-l'J} (\mathbb{B}^{n}).
\end{align*}
By this and $y\in {a_{5}^{-1}}\,M_{y,\,t-l'J}(\mathbb{B}^{n})+z$, we obtain $y\in\theta(z,\,t-{l'}J)$.
From this and $F_x(y,t)>\lambda$ with $t\geq t_{0}$, we deduce that $F_{l'}^{*\,t_{0}}(z)>\lambda$ and hence $z\in \Omega$, which implies
\begin{align}\label{eb4.6}
{a_{5}^{-1}}\,\theta(y,\,t-{l'}J)\subseteq \Omega.
\end{align}

Besides, by $y\in\theta(x,\,t-{l}J)$, \eqref{e2.2} and $l\geq{l'}$, we have
$$\lf\|M_{x,\,t-lJ}^{-1}\,M_{y,\,t-l'J}\r\|\leq a_{5}2^{-a_{6}(l-l')J}\leq a_{5}.$$
From this, it follows that
$${a_{5}}^{-1}M_{x,\,t-lJ}^{-1}\,M_{y,\,t-l'J}(\mathbb{B}^{n})\subseteq \mathbb{B}^{n}$$
and hence
\begin{align*}
{a_{5}}^{-1}\,M_{y,\,t-l'J}(\mathbb{B}^{n})\subseteq M_{x,\,t-lJ} (\mathbb{B}^{n}).
\end{align*}
By this, \eqref{ef2.4}, $y\in\theta(x,\,t-{l}J)$ and Proposition \ref{l2.2}, we obtain
\begin{align*}
{a_{5}}^{-1}M_{y,\,t-l'J}(\mathbb{B}^{n})+y
\subseteq 2M_{x,\,t-lJ}(\mathbb{B}^{n})+x
\subseteq\theta(x,\,t-(l+1)J).
\end{align*}
From this and \eqref{eb4.6}, we deduce that \eqref{e2.15} holds true.

Next, let's prove \eqref{e2.15x}. By \eqref{e2.15} and \eqref{e2.1}, we obtain
\begin{align}\label{e4.6}
|\theta(x,\,t-(l+1)J)\cap \Omega|&\geq {(a_{5})}^{-n}|\theta(y,\,t-l'J)|\\
&\geq\frac{a_{1}}{(a_{5})^{n}}2^{l'J-t}\nonumber.
\end{align}
Taking $b_{0}:=t-(l+1)J$, by \eqref{e2.1} and \eqref{e4.6}, we have
\begin{align*}
\frac{1}{|\theta(x,\,b_{0})|}\int_{\theta(x,\,b_{0})}|\chi_{\Omega}(y)|dy&\ge{a_{2}}^{-1}2^{b_{0}}|\theta(x,\,b_{0})\cap\Omega|
\geq\frac{a_{1}}{(a_{5})^{n}a_{2}}2^{(l'-l-1)J},
\end{align*}
 which implies $M_{\Theta}(\chi_{\Omega})(x)\geq C_{1}2^{(l'-l)J}$ and hence \eqref{e2.15x} holds true,
  where\\ $C_{1}:=2^{-J}{a_{1}}/[{(a_{5})^{n}a_{2}}]$.
\end{proof}

The following result enables us to pass from one function in $\mathcal{S}$ to the sum of dilates of another function in $\mathcal{S}$ with nonzero mean, which is a variable anisotropic extension of \cite[ p.\,93,\, Lemma 2]{s} of Stein and \cite[Lemma 7.3 ]{b} of Bownik.

\begin{proposition}\label{l4.5}
Suppose $\Theta$ is an ellipsoid cover, $\varphi\in \mathcal{S}$ and $\int_{\mathbb{R}^{n}}\varphi(x)\,dx\neq0$. For every $\psi \in \mathcal{S}$, $x\in\mathbb{R}^n$ and $t\in\mathbb{R} $ there exists a sequence of test functions $\{\eta^{k}\}_{k=0}^{\fz}$, $\eta^{k}\in \mathcal{S}$ such that
\begin{align}\label{e4.7}
\psi(\xi)=\sum_{k=0}^{\fz}\eta^{k}*\widetilde{\varphi}^{k}(\xi) \ \ \text{converges in}\ \ \mathcal{S},
\end{align}
where $\widetilde{\varphi}^{k}(\xi):=\varphi^{x,\,t}_{k}(\xi):=|{\rm det}(M^{-1}_{k}M_{x,t})|\varphi(M^{-1}_{k}M_{x,t}\xi)$. Here and hereafter, for any $\theta(x,t+kJ)=x+M_{x,t+kJ}(\mathbb{B}^n)\in\Theta$, we denote $M_{k}:=M_{x,t+kJ}$.

Furthermore, for any positive integers $L,\,N,\,\widetilde{N}$ and $\widetilde{N}\geq N$, there exist integers $M\geq N+2(n+1)$, $\widetilde{M}\geq \max\{\widetilde{N}+2(n+1)+L(\lfloor1/({a_{6}J})\rfloor+1),\,M\}$ and constant $C$ (depending on $L$, $N$, $\widetilde{N}$ and parameters $p(\Theta)$, but independent of the choice of $\psi$) such that
\begin{align}\label{e4.8}
\|\eta^{k}\|_{\mathcal{S}_{N,\,\widetilde{N}}}\leq C2^{-kL}\|\psi\|_{\mathcal{S}_{M,\,\widetilde{M}}}.
\end{align}
\end{proposition}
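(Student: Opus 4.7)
The plan is to work on the Fourier side, constructing $\eta^{k}$ via a smooth resolution of the identity adapted to the anisotropic matrices $M_{k}$. A direct change of variables gives
\[
\widehat{\wz{\varphi}^{\,k}}(\zeta)=\hat{\varphi}(C_{k}\zeta),\qquad C_{k}:=(M_{x,t}^{-1}M_{k})^{T},
\]
and applying \eqref{e2.2} with $y=x$, first with $s=kJ$ and then with $s=J$ at the level $t+(k-1)J$, yields the uniform bounds
\[
\|C_{k}\|\le a_{5}2^{-a_{6}kJ}\quad\text{and}\quad \|C_{k}C_{k-1}^{-1}\|=\|M_{k-1}^{-1}M_{k}\|\le a_{5}2^{-a_{6}J}.
\]
In particular $C_{k}\to 0$ in matrix norm, and the ratio between consecutive scales is controlled by $p(\Theta)$ alone.

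Since $\hat{\varphi}(0)=\int_{\rn}\varphi\ne 0$, I would pick $R>0$ so small that $\hat{\varphi}$ does not vanish on $\overline{\mathbb{B}_{a_{5}R}}$ and choose $\tau\in C_{c}^{\infty}(\rn)$ with $\tau\equiv 1$ on $\mathbb{B}_{R/2}$ and $\supp\tau\subset\mathbb{B}_{R}$. Setting $\tau_{0}(\zeta):=\tau(C_{0}\zeta)$ and $\tau_{k}(\zeta):=\tau(C_{k}\zeta)-\tau(C_{k-1}\zeta)$ for $k\ge 1$ produces a telescoping partition of unity $\sum_{k\ge 0}\tau_{k}(\zeta)=\lim_{K\to\fz}\tau(C_{K}\zeta)=1$. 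The bound $\|C_{k}C_{k-1}^{-1}\|\le a_{5}$ forces $\supp\tau_{k}\subset\{\zeta:|C_{k}\zeta|\le a_{5}R\}$, so $\hat{\varphi}(C_{k}\zeta)\ne 0$ there, and I may legitimately set
\[
\hat{\eta}^{\,k}(\zeta):=\frac{\hat{\psi}(\zeta)\tau_{k}(\zeta)}{\hat{\varphi}(C_{k}\zeta)}\in C_{c}^{\infty}(\rn),
\]
so $\eta^{k}\in\cs$. The partial sums satisfy $\sum_{k=0}^{K}\hat{\eta}^{\,k}\widehat{\wz{\varphi}^{\,k}}=\hat{\psi}\cdot\tau(C_{K}\cdot)$, which converges to $\hat{\psi}$ in $\cs$ as $K\to\fz$ (the difference lives on the shell where $\hat{\psi}$ already has rapid decay), establishing \eqref{e4.7}.

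For \eqref{e4.8} I would bound $\|\eta^{k}\|_{\cs_{N,\wz{N}}}$ via Fourier inversion: writing $\partial^{\az}\eta^{k}(y)$ as the inverse transform of $(2\pi i\zeta)^{\az}\hat{\eta}^{\,k}$ and multiplying by $(1+|y|)^{\wz{N}}$ transfers $\wz{N}$ differentiations onto $\hat{\eta}^{\,k}$, while an additional $n+1$ differentiations and powers of $\zeta$ supply $L^{1}(\rn)$ integrability; this accounts for the offsets $M\ge N+2(n+1)$ and $\wz{M}\ge\wz{N}+2(n+1)$. Expanding $\hat{\psi}\,\tau_{k}/\hat{\varphi}(C_{k}\cdot)$ by Leibniz, each $\zeta$-derivative of $\tau(C_{k}\cdot)$ or of $1/\hat{\varphi}(C_{k}\cdot)$ produces an entry of $C_{k}$ controlled by $\|C_{k}\|\le a_{5}$ uniformly in $k$, so one is reduced to estimating $\hat{\psi}$ and its derivatives on $\supp\tau_{k}$.

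The delicate point is the $2^{-kL}$ factor. For $k\ge 1$, on $\supp\tau_{k}$ at least one of $|C_{k}\zeta|$, $|C_{k-1}\zeta|$ must be $\ge R/2$ (otherwise both cutoffs equal $1$), which together with the norm bounds forces
\[
|\zeta|\ge\frac{R}{2a_{5}}\,2^{a_{6}(k-1)J}.
\]
Using the remaining Schwartz decay of $\hat{\psi}$ of order $\wz{M}-\wz{N}-2(n+1)$ then contributes a factor $2^{-a_{6}(k-1)J(\wz{M}-\wz{N}-2(n+1))}$, and the choice $\wz{M}\ge\wz{N}+2(n+1)+L(\lfloor 1/(a_{6}J)\rfloor+1)$ turns this into the required $2^{-kL}$. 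The main obstacle I expect is making this anisotropic shell bound rigorous while keeping all constants independent of the base point $(x,t)$; once the matrix-norm estimates from \eqref{e2.2} are threaded through the Leibniz expansion uniformly in $k$, \eqref{e4.8} follows.
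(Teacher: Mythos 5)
Your construction is essentially identical to the paper's: the same telescoping Fourier-side partition of unity built from the rescaled cutoffs $\tau(C_k\zeta)$ with $C_k=M_k^T(M_{x,t}^T)^{-1}$, the same definition $\hat{\eta}^k=\hat{\psi}\,\tau_k/\hat{\varphi}(C_k\cdot)$, the same support/shell estimate $|\zeta|\gtrsim 2^{a_6(k-1)J}$ driving the $2^{-kL}$ decay, and the same passage between $\|\cdot\|_{\mathcal{S}_{N,\widetilde N}}$ and Fourier-side seminorms accounting for the $2(n+1)$ offsets. The only (harmless) deviations are that you shrink the cutoff radius $R$ instead of rescaling $\varphi$ so that $\hat\varphi$ is bounded below on $2a_5\mathbb{B}^n$, and you get convergence in $\mathcal{S}$ directly from the partial-sum identity $\sum_{k\le K}\hat\eta^k\widehat{\widetilde\varphi^k}=\hat\psi\,\tau(C_K\cdot)$ rather than via the paper's convolution seminorm estimate.
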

\begin{proof}
The proof is divided into 3 steps since it is a little complicated.

\textbf{Step\,1.} Show \eqref{e4.7}
holds pointwise everywhere. By scaling of $\varphi$ and $\mathbb{B}^{n}$ we can assume that $\int_{\rn}\varphi(x)dx=1$ and $|\hat{\varphi}(\xi)|\geq1/2$ for $\xi\in2a_{5}\mathbb{B}^{n}$. Consider a
infinitely differentiable  function $\zeta$ such that $\zeta\equiv1$ on $\mathbb{B}^{n}$ and supp $\zeta \subset 2\mathbb{B}^{n}$. Fix $x\in\mathbb{R}^n$ and $t\in\mathbb{R} $. Define a sequence of functions $\{\zeta_{k}\}_{k=0}^{\fz}$, where $\zeta_{0}:=\zeta$,
$$\zeta_{k}(\xi):=\zeta\lf(M^{T}_{k}(M^{T}_{x,t})^{-1}\,\xi\r)-\zeta\lf(M^{T}_{k-1}(M^{T}_{x,t})^{-1}\xi\r)\ \ \ \  \text{for}\ \ k \geq1,\ \ \xi\in \mathbb{R}^n,$$
where $M^{T}$ denotes the transposed matrix of $M$.
By $\|M\|=\|M^{T}\|$ and \eqref{e2.2}, for any given $\xi\in {\mathbb{R}^{n}}$, we obtain
\begin{align*}
\lf|M^{T}_{k}(M^{T}_{x,t})^{-1}\,\xi \r| \leq \lf\|M^{T}_{k}(M^{T}_{x,t})^{-1} \r\||\xi| \leq a_{5}2^{-a_{6}kJ}|\xi|\rightarrow 0 \ \  \text{as} \ \ k\rightarrow \fz.
\end{align*}
From this, we deduce that, for any $\xi\in {\mathbb{R}^{n}}$, there exists $k$ large enough such that $M^{T}_{k}(M^{T}_{x,t})^{-1}\,\xi \in \mathbb{B}^{n}$. By this, we have
$$\sum_{k=0}^{\fz}\zeta_{k}(\xi)=1,  \ \ \   \xi\in {\mathbb{R}^{n}}.$$
Thus,
$$\hat{\psi}(\xi)=\sum_{k=0}^{\fz}\frac{\zeta_{k}(\xi)}{\hat{\varphi}\lf(M^{T}_{k}(M^{T}_{x,t})^{-1}\,\xi\r)}
\hat{\psi}(\xi){\hat{\varphi}\lf(M^{T}_{k}(M^{T}_{x,t})^{-1}\,\xi\r)}.$$
For $k\in N_0$ define $\eta^{k}$ by
\begin{align*}
\hat{\eta^{k}}(\xi):=\frac{\zeta_{k}(\xi)}{\hat{\varphi}\lf(M^{T}_{k}(M^{T}_{x,t})^{-1}\,\xi\r)}
\hat{\psi}(\xi).
\end{align*}
By ${\hat{\varphi}(M^{T}_{k}(M^{T}_{x,t})^{-1}\,\xi)}=\hat{\widetilde{\varphi}^{k}}(\xi)$ and the choice of ${\{\eta^{k}\}}_{k=0}^{\fz}$
, we know that \eqref{e4.7} holds true pointwise everywhere.

\textbf{Step\,2.} Prove \eqref{e4.7} holds in $\mathcal{S}$. We first claim that for any positive integers $N$, $\widetilde{N}\in\mathbb{N}_{0}$ and $\widetilde{N}\geq N$, there exists a constant $C>0$ so that
\begin{align}\label{e2.20}
 \sup_{x\in{\mathbb{R}^n}}\sup_{t\in\mathbb{R}}\|\eta^{k}*\widetilde{\varphi}^{k}\|_{\mathcal{S}_{N,\widetilde{N}}}
 \leq C\|\varphi\|_{\mathcal{S}_{N+n+1,\widetilde{N}+n+1}}\|\eta^{k}\|_{\mathcal{S}_{N,\widetilde{N}}}.
 \end{align}
 Indeed, for any multi-index $\alpha$, $|\alpha|\leq N$, $k\in\mathbb{N}_0$, and $x,\,\xi\in \mathbb{R}^n$, by \eqref{e2.2}, we have
\begin{align*}
&(1+|\xi|)^{\widetilde{N}}|\partial^{\alpha}(\eta^{k}*\widetilde{\varphi}^{k})(\xi)|=(1+|\xi|)^{\widetilde{N}}|\lf(\partial^{\alpha}\eta^{k}*\widetilde{\varphi}^{k}\r)(\xi)|\\
&\quad\leq\int_{\rn}(1+|\xi-y|)^{\widetilde{N}}\lf|\partial^{\alpha}\eta^{k}(\xi-y)\r|(1+|y|)^{\widetilde{N}}\lf|\widetilde{\varphi}^{k}(y)\r|dy\\
&\quad\leq2^{kJ}\lf\|\eta^{k}\r\|_{\mathcal{S}_{N,\widetilde{N}}}\|\varphi\|_{\mathcal{S}_{N+n+1,\widetilde{N}+n+1}}\\
&\ \ \ \quad\times\int_{\rn}\lf(1+\lf\|M_{x,t}^{-1}M_{k}\r\|\lf|M_{k}^{-1}M_{x,t}y\r|\r)^{\widetilde{N}}\lf(1+\lf|M_{k}^{-1}M_{x,t}y\r|\r)^{-(\widetilde{N}+n+1)}dy\\
&\quad\leq(a_{5})^{\widetilde{N}}2^{kJ}\lf\|\eta^{k}\r\|_{\mathcal{S}_{N,\widetilde{N}}}\|\varphi\|_{\mathcal{S}_{N+n+1,\widetilde{N}+n+1}}\int_{\rn}(1+\lf|M_{k}^{-1}M_{x,t}y\r|)^{-(n+1)}dy\\
&\quad\leq C\|\varphi\|_{\mathcal{S}_{N+n+1,\widetilde{N}+n+1}}\lf\|\eta^{k}\r\|_{\mathcal{S}_{N,\widetilde{N}}},
\end{align*}
which implies \eqref{e2.20} holds true.

Now, by Step\,1, \eqref{e2.20} and \eqref{e4.8}, we may obtain that the convergence of the series  \eqref{e4.7} is in $\mathcal{S}$. Therefore, it remains to show \eqref{e4.8}.

\textbf{ Step\,3.} Show \eqref{e4.8}. Firstly, we claim that, for any $\eta\in\mathcal{S}$, positive integers $N,\,{N'}$ and $ {N'}\geq N$, there exists a constant $C>0$ such that
\begin{align}\label{e4.9}
\|\hat{\eta}\|_{\mathcal{S}_{N,\,{N'}}}\leq C\|\eta\|_{\mathcal{S}_{N+n+1,\,{N'}+n+1}}.
\end{align}
Indeed, for any multi-indices $|\alpha|,\,|\beta|\leq N$, we have
\begin{align*}
(2\pi i)^{|\beta|}\xi^{\beta} \partial^{\alpha}\hat{\eta}(\xi)=(-2\pi i)^{|\alpha|}\int_{\mathbb{R}^{n}} \partial^{\beta}\lf[x^{\alpha}{\eta}(x)\r]e^{-2\pi i\langle x,\xi \rangle}dx.
\end{align*}
Hence, by multiplying and dividing the right hand side by $(1+|x|)^{n+1}$, we have
\begin{align*}
\lf|\xi^{\beta} \partial^{\alpha}\hat{\eta}(\xi)\r|\leq(2\pi)^{|\alpha|-|\beta|}\sup_{x\in \mathbb{R}^{n}}(1+|x|)^{n+1}\lf|\partial^{\beta}\lf[x^{\alpha}{\eta}(x)\r]\r|
\int_{\mathbb{R}^{n}}(1+|x|)^{-n-1}dx,
\end{align*}
which implies \eqref{e4.9}.

Thus, to show \eqref{e4.8}, by \eqref{e4.9}, it suffices to show that there exists a constant $C$ (independent of $\psi\in\mathcal{S}$) such that
\begin{align}\label{e4.10x}
\lf\|\hat{\eta^{k}}\r\|_{\mathcal{S}_{N+n+1,\,\widetilde{N}+n+1}}\leq C2^{-kL}\|\psi\|_{\mathcal{S}_{M,\,\widetilde{M}}}, \ \ \ k\in \mathbb{N}_0.
\end{align}

 We first claim
\begin{align}\label{e4.10}
\sup_{\xi\in\mathbb{R}^{n}}\sup_{|\alpha|\leq N+n+1}\lf|\partial^{\alpha}\lf(\zeta_{k}(\cdot)/\hat{\varphi}\lf(M^{T}_{k}\lf(M^{T}_{x,t}\r)^{-1}\cdot\r)\r)(\xi)\r|\leq C_1,
\end{align}
where $C_1:=C(\zeta,\,N,\,n,\,p(\Theta))$ is a positive constant. Indeed, let $$\tilde{{\zeta}}(\xi):=[\zeta(\xi)-\zeta(M^{T}_{(k-1)}(M^{T}_{x,k})^{-1}\xi)]/\hat{\varphi}(\xi).$$
Obviously,
\begin{align}\label{e2.21}
 \text{supp}\, \tilde{{\zeta}}\subseteq \,\text{supp}\, {{\zeta}} \cup\, \text{supp}\, {{\zeta}}(M^{T}_{k-1}(M^{T}_{k})^{-1}\cdot).
 \end{align}
If $M^{T}_{k-1}(M^{T}_{k})^{-1}\xi \in 2\mathbb{B}^{n}$, then we have
\begin{align}\label{e2.24x}
\xi \in 2M^{T}_{k}(M^{T}_{k-1})^{-1}(\mathbb{B}^{n}).
 \end{align}
Furthermore, by $\|M^{T}\|=\|M\|$ and \eqref{e2.2}, we obtain
$$\lf\|M^{T}_{k}(M^{T}_{k-1})^{-1}\r\|\leq a_{5}2^{-a_{6}J}\leq a_{5},$$
which implies $M^{T}_{k}(M^{T}_{k-1})^{-1}(\mathbb{B}^{n})\subseteq a_{5}\mathbb{B}^{n}$. By this, \eqref{e2.24x}, \eqref{e2.21}, supp ${\zeta} \subset 2\mathbb{B}^{n}$ and $a_{5}\geq1$ (see \eqref{ef2.4}), we have
$$ \text{supp}\, \tilde{\zeta} \subset 2\mathbb{B}^{n}\cup 2a_{5}\mathbb{B}^{n}=2a_{5}\mathbb{B}^{n}.$$
Using this
and $|\hat{\varphi}(\xi)|\geq1/2$ for any $\xi\in2a_{5}\mathbb{B}^{n}$, we obtain
\begin{align}\label{e2.24}
\sup_{\xi\in\mathbb{R}^{n}}\sup_{|\alpha|\leq N+n+1}|\partial^{\alpha}\tilde{\zeta}(\xi)|\leq C,
\end{align}
where $C:=C(\zeta,\,N,\,n,\,p(\Theta))$ is a positive constant.

From $\tilde{{\zeta}}(M^{T}_{k}(M^{T}_{x,t})^{-1}\xi)=\zeta_{k}(\xi)/\hat{\varphi}(M^{T}_{k}(M^{T}_{x,t})^{-1}\xi)$, the chain rule, $\|M^{T}\|=\|M\|$, \eqref{e2.2} and \eqref{e2.24}, it follows that
\begin{align*}
&\sup_{\xi\in\mathbb{R}^{n}}\sup_{|\alpha|\leq N+n+1}\lf|\partial^{\alpha}\lf(\zeta_{k}(\cdot)/\hat{\varphi}\lf(M^{T}_{k}\lf(M^{T}_{x,t}\r)^{-1}\cdot\r)\r)(\xi)\r|\\
&\quad=\sup_{\xi\in\mathbb{R}^{n}}\sup_{|\alpha|\leq N+n+1}\lf|\partial^{\alpha}\lf(\tilde{{\zeta}}\lf(M^{T}_{k}\lf(M^{T}_{x,t}\r)^{-1}\cdot\r)\r)(\xi)\r|\nonumber\\
&\quad\leq C \sup_{\xi\in\mathbb{R}^{n}}\sup_{|\alpha|\leq N+n+1}\lf\|M^{T}_{k}\lf(M^{T}_{x,t}\r)^{-1}\r\|^{|\alpha|}\lf|\lf(\partial^{\alpha}\tilde{{\zeta}}\r)\lf(M^{T}_{k}\lf(M^{T}_{x,t}\r)^{-1}\xi\r)\r|\nonumber\\
&\quad\leq C \sup_{\xi\in\mathbb{R}^{n}}\sup_{|\alpha|\leq N+n+1}\lf(a_{5}2^{-a_{6}kJ}\r)^{|\alpha|}\lf|\lf(\partial^{\alpha}\tilde{{\zeta}}\r)\lf(M^{T}_{k}\lf(M^{T}_{x,t}\r)^{-1}\xi\r)\r|\nonumber\\
& \quad\leq  Ca_{5}^{N+n+1}\sup_{\xi\in\mathbb{R}^{n}}\sup_{|\alpha|\leq N+n+1}\lf|\partial^{\alpha}\tilde{{\zeta}}(\xi)\r|\leq C_{1},
\end{align*}
which means \eqref{e4.10} holds true.

Notice that for any $\xi\in a_{5}^{-1}M^{T}_{x,t}(M^{T}_{k-1})^{-1}(\mathbb{B}^{n})$, by \eqref{e2.2} and $\zeta\equiv1$ on $\mathbb{B}^{n}$, we have $\zeta_{k}(\xi)=0$. By this, the product rule and \eqref{e4.10}, we obtain
\begin{align}\label{e2.26}
\lf\|\hat{\eta^{k}}\r\|_{\mathcal{S}_{N+n+1,\,\widetilde{N}+n+1}}
&= \sup_{\xi\notin a_{5}^{-1} M^{T}_{x,t}(M^{T}_{k-1})^{-1}(\mathbb{B}^{n})}\sup_{|\alpha|\leq N+n+1}\lf\{(1+|\xi|)^{\widetilde{N}+n+1}\r.\\
&\qquad\hs \lf. \times\lf|\partial^{\alpha}\lf[\zeta_{k}(\cdot)\hat{\psi}(\cdot)/\hat{\varphi}\lf(M^{T}_{k}\lf(M^{T}_{x,t}\r)^{-1}\cdot\r)\r](\xi)\r|\r\}\nonumber\\
&\leq C_{N,\,n}C_1\sup_{\xi\notin a_{5}^{-1}M^{T}_{x,t}(M^{T}_{k-1})^{-1}(\mathbb{B}^{n})}\sup_{|\alpha|\leq N+n+1}(1+|\xi|)^{\widetilde{N}+n+1}\lf|\partial^{\alpha}\hat{\psi}(\xi)\r|\nonumber\\
&\leq C_{N,\,n}\,C_{1} \sup_{\xi\notin a_{5}^{-1}M^{T}_{x,t}(M^{T}_{k-1})^{-1}(\mathbb{B}^{n})}(1+|\xi|)^{-L(\lfloor\frac{1}{a_{6}J}\rfloor+1)}\lf\|\hat{\psi}\r\|_{\mathcal{S}_{M-n-1,\,\widetilde{M}-n-1}}\nonumber,
\end{align}
where $M\geq N+2(n+1)$ and $\widetilde{M}\geq \max{\{\widetilde{N}+2(n+1)+L(\lfloor1/({a_{6}J})\rfloor+1),\,M\}}$.
From $\|M^{T}\|=\|M\|$ and \eqref{e2.2}, we deduce that
$$\|M^{T}_{k-1}(M^{T}_{x,\,t})^{-1}\| \leq a_{5}2^{-a_{6}(k-1)J}$$
and hence
$$M^{T}_{k-1}(M^{T}_{x,\,t})^{-1}(\mathbb{B}^{n})\subseteq a_{5}2^{-a_{6}(k-1)J}\mathbb{B}^{n},$$
which implies
$$  \frac{2^{a_{6}(k-1)J}}{a_{5}}\mathbb{B}^{n}\subseteq M^{T}_{x,\,t}(M^{T}_{k-1})^{-1}(\mathbb{B}^{n}).$$
By this and $\xi\notin a_{5}^{-1}M^{T}_{x,\,t}(M^{T}_{k-1})^{-1}(\mathbb{B}^{n})$, we have
\begin{align}\label{e2.27}
|\xi|\geq  2^{a_{6}(k-1)J}/(a_{5})^{2}.
  \end{align}
Consequently, inserting \eqref{e2.27} into \eqref{e2.26} and
by \eqref{e4.9}, we have
 \begin{align*}
 \lf\|\hat{\eta^{k}}\r\|_{\mathcal{S}_{N+n+1,\,\widetilde{N}+n+1}}
\leq C2^{-kL}\lf\|\hat{\psi}\r\|_{\mathcal{S}_{M-n-1,\,\widetilde{M}-n-1}}
\leq C2^{-kL}\|\psi\|_{\mathcal{S}_{M,\,\widetilde{M}}}
\end{align*}
and hence \eqref{e4.10x} holds true. This finishes the proof of Proposition \ref{l4.5}.
\end{proof}



\section{Maximal Function Characterizations of $H^p(\Theta)$}\label{s3}
In this section, we show the maximal function characterizations of $H^p(\Theta)$ using
the radial, the non-tangential and the tangential maximal function of a single test function $\varphi\in \mathcal{S}$.

\begin{theorem}\label{t4.1}
Let $\Theta$ be an ellipsoid cover, $0<p<\fz$ and $\varphi\in \cs$ satisfy $\int_{\mathbb{R}^{n}}\varphi(x)\,dx\neq0$. Then for any $f\in\mathcal{S'}$, the following are mutually equivalent:
\begin{align}\label{e3.1}
f\in {H^p(\Theta)};
\end{align}
\begin{align}\label{e3.2}
M_\varphi f\in{L^p};
\end{align}
\begin{align}\label{e3.3}
M^0_\varphi f\in{L^p};
\end{align}
\begin{align}\label{e3.4}
T^N_\varphi f\in{L^p}&, \ \ \ N>\frac{1}{a_{6}p}.
\end{align}
In this case,
\begin{align*}
\|f\|_{H^p(\Theta)}=\lf\|M^{0}f\r\|_{L^{p}}\leq C_{1}\lf\|T^N_\varphi f\r\|_{L^{p}}\leq C_{2}\lf\|M_{\varphi} f\r\|_{L^{p}} \leq C_{3}\lf\|M^0_{\varphi} f\r\|_{L^{p}}\leq C_{4}\|f\|_{H^p(\Theta)},
\end{align*}
where the positive constants $ C_{1}$, $C_{2}$, $C_{3}$ and $C_{4}$ are independent of $f$.
\end{theorem}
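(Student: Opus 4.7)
The theorem amounts to the cyclic chain of four $L^p$ inequalities
\[
\|M^0 f\|_{L^p} \leq C_1 \|T^N_\varphi f\|_{L^p} \leq C_2 \|M_\varphi f\|_{L^p} \leq C_3 \|M^0_\varphi f\|_{L^p} \leq C_4 \|f\|_{H^p(\Theta)},
\]
which I will label (A)--(D) from left to right. Inequality (D) is immediate: dividing $\varphi$ by $\|\varphi\|_{\mathcal{S}_{N_p,\widetilde{N}_p}}$ places the normalised test function in $\mathcal{S}_{N_p,\widetilde{N}_p}$, so $M^0_\varphi f \leq \|\varphi\|_{\mathcal{S}_{N_p,\widetilde{N}_p}} M^0 f$ pointwise, and $\|M^0 f\|_{L^p} = \|f\|_{H^p(\Theta)}$ by Proposition \ref{p2.7} and Remark \ref{r2.4}.

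For (A) I plan to use Proposition \ref{l4.5} to reduce an arbitrary Schwartz test function to a sum of dilates of $\varphi$. Fix $x \in \rn$, $t \in \mathbb{R}$, and $\psi \in \mathcal{S}_{M,\widetilde{M}}$ for parameters $M,\widetilde{M}$ to be chosen, and decompose $\psi = \sum_{k \geq 0} \eta^k * \widetilde{\varphi}^k$. Since dilation commutes with convolution, a short computation yields $(\widetilde{\varphi}^k)_{x,t} = \varphi_{x,t+kJ}$, so the definition of the tangential maximal function delivers
\[
|f * \psi_{x,t}(x)| \leq T^N_\varphi f(x) \sum_{k=0}^\infty \int_{\rn} (1 + |M_{x,t+kJ}^{-1} z|)^N\,|(\eta^k)_{x,t}(z)|\,dz.
\]
The change of variable $w = M_{x,t}^{-1} z$ together with the bound $\|M_{x,t+kJ}^{-1} M_{x,t}\| \leq a_3^{-1} 2^{a_4 k J}$ from \eqref{e2.2} dominates the $k$-th integral by $C\,2^{a_4 kJN} \|\eta^k\|_{\mathcal{S}_{0,\widetilde{N}}}$ as soon as $\widetilde{N} \geq N + n + 1$. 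The geometric decay \eqref{e4.8} then converts this into a convergent geometric series provided $L > a_4 J N$, and selecting $M,\widetilde{M}$ according to Proposition \ref{l4.5} with $M \leq N_p$ and $\widetilde{M} \leq \widetilde{N}_p$ gives the pointwise bound $M^0 f(x) \leq C T^N_\varphi f(x)$.

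Inequalities (B) and (C) are both handled by a truncation-plus-maximal-function argument. Fix $q \in (0,p)$ close to $p$, $t_0 < 0$, and an integer $l$, and apply \eqref{e3.3x} to
\[
F_x(y,t) := |f * \varphi_{x,t}(y)|\bigl(1 + |M_{x,t}^{-1}(x-y)|\bigr)^{-N}
\]
to obtain the truncated tangential maximal function $F^{*\,t_0}_l$. The heart of the proof is the pointwise bound
\[
F^{*\,t_0}_l(x) \leq C\bigl(M_\Theta\bigl[(F^{*\,t_0}_{l'})^q\bigr](x)\bigr)^{1/q},\qquad x \in \rn,
\]
for a suitable $l' < l$, to be established by near-maximising in $(y,t)$, using a gradient estimate for $f * \varphi_{x,t}$ on $\theta(y,t-l'J)$ together with the shape condition \eqref{e2.2}, and dominating the attained value by the $L^q$-average of $F^{*\,t_0}_{l'}$ on an enlarged ellipsoid centred at $x$. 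Applying Proposition \ref{l4.3}(ii) in $L^{p/q}$ on $\mathbb{B}_K$, then exchanging apertures via Proposition \ref{l4.4} and sending $K \to \infty$ and $t_0 \to -\infty$ through monotone convergence, yields (B); inequality (C) follows analogously after replacing the decay factor $(1+|M_{x,t}^{-1}(x-y)|)^{-N}$ by $\chi_{\theta(x,t)}(y)$. The restriction $N > 1/(a_6 p)$ is exactly what guarantees that the aperture-loss $2^{(l-l')J}$ in \eqref{e4.4} is absorbed by the tangential decay $2^{-a_6 N l J}$ when integrating.

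The principal obstacle will be the pointwise comparison between $F_x$ and $F_y$ for neighbouring centres, a step that is trivial in Bownik's constant-anisotropy framework. Here $M_{x,t}$ and $M_{y,t}$ are distinct matrices related only through the constants $a_3, a_5$ in \eqref{e2.2}, so every transfer of an estimate across centres costs a dimensional constant which must be absorbed into $M_\Theta$. Precisely for this reason the pointwise continuity of $\Theta$ (guaranteed by Remark \ref{r2.2s}) is essential: it underlies both the lower semicontinuity of $F^{*\,t_0}_l$ in Proposition \ref{p2.12} and the aperture-exchange estimate in Proposition \ref{l4.4}, and these two tools together make the Hardy--Littlewood step operational in this variable-centre setting.
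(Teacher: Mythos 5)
Your chain (A)--(D), the treatment of (D), and the plans for (A) and (B) follow the paper's route. For (A) you are reproducing Lemma \ref{l4.7}: Proposition \ref{l4.5} applied to $\psi$, the identity $(\widetilde{\varphi}^k)_{x,t}=\varphi_{x,t+kJ}$, and the geometric decay \eqref{e4.8}. One correction there: Proposition \ref{l4.5} \emph{forces} $M\geq N+2(n+1)$ and $\widetilde{M}$ even larger, so you cannot choose $M\leq N_p$, $\widetilde{M}\leq\widetilde{N}_p$; the pointwise bound you obtain is $M^0_{M,\widetilde{M}}f\leq C\,T^N_\varphi f$ for \emph{large} $M,\widetilde{M}$, and you must then invoke Proposition \ref{p2.7} to pass to $\|M^0_{N_p,\widetilde{N}_p}f\|_{L^p}\lesssim\|M^0_{M,\widetilde{M}}f\|_{L^p}$ at the level of norms, since the pointwise inequality between the two grand maximal functions goes the wrong way. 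Part (B) is exactly Lemma \ref{l4.6}: annular decomposition of $y$, the bound $|M_{x,t}^{-1}(x-y)|\gtrsim 2^{a_6(k-1)J}$ on the $k$-th annulus, and the aperture exchange \eqref{e4.4}, with $N>1/(a_6p)$ giving summability.

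The genuine gap is (C), $\|M_\varphi f\|_{L^p}\leq C_3\|M^0_\varphi f\|_{L^p}$, which you dismiss as ``analogous after replacing the decay factor by $\chi_{\theta(x,t)}(y)$''. It is not: Proposition \ref{l4.4} exchanges two \emph{positive} apertures $\theta(x,t-lJ)$ and $\theta(x,t-l'J)$, and its covering argument degenerates when the target is the radial maximal function, whose ``aperture'' is the single point $y=x$. Your proposed pointwise inequality $F_l^{*\,t_0}(x)\leq C(M_\Theta[(F_{l'}^{*\,t_0})^q](x))^{1/q}$ with the radial function on the right cannot be established everywhere by the gradient argument you sketch, because the oscillation of $f*\varphi_{x,t'}$ near its near-maximiser is controlled by derivatives of $\varphi$, hence by the \emph{grand} maximal function (in the paper, via the perturbation $\Phi$ with $\|\Phi\|_{\mathcal{S}_{M,\widetilde{M}}}\lesssim 2^{-a_6 lJ}$), which is not pointwise dominated by $M_\varphi f$. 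The paper's estimate \eqref{e4.15} therefore holds only on the good set $\Omega_{t_0}$ of \eqref{e4.13}, where the grand maximal function is at most $C_2$ times the non-tangential one and the oscillation term can be absorbed into $\tfrac14 M_\varphi^{(t_0,L)}f(x)$. Closing the loop then needs two further ingredients you omit: the a priori finiteness $M_\varphi^{(t_0,L)}f\in L^p(\mathbb{B}_K)$, which is precisely what the decay weights $(1+2^{t+t_0})^{-L}(1+|M_{x,t_0}^{-1}y|)^{-L}$ and Lemma \ref{l4.8} provide --- your $F_x$ carries no such weights, and for a general $f\in\mathcal{S}'$ even the truncated supremum over $t\geq t_0$ can be infinite as $t\to+\infty$ --- and the resulting bound \eqref{ef4.14} showing the complement of the good set contributes at most half of the $L^p$ mass. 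Only after \eqref{e4.16} is proved uniformly in $t_0$ can one send $t_0\to-\infty$ and then take $L=0$. Without the weights, the good set, and the absorption step, your argument for (C) is circular.
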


The framework to prove Theorem \ref{t4.1} is motivated by Fefferman and Stein \cite{fe}, \cite[Chapter III]{s}, and Bownik \cite[p.42, Theorem 7.1]{b}.

Inspired by Fefferman and Stein \cite[p.\,97]{s} and Bownik \cite[p.\,47]{b}, we now start with maximal functions obtained from truncation with an additional extra decay term. Namely, for $t_{0}<0$ representing the truncation level
and real number $L\geq0$ representing the decay level, we define the {\it radial}, the {\it non-tangential}, the {\it tangential}, the {\it grand radial}, and the {\it grand non-tangential maximal functions}, respectively as
\begin{align*}
&M_{\varphi}^{0\,(t_{0},\,L)}f(x):=\sup_{t\geq t_0}|(f\ast\varphi_{x,\,t})(x)|\,\lf(1+\lf|M_{x,\,t_{0}}^{-1}\,x\r|\r)^{-L}\lf(1+{2^{t+{t_{0}}}}\r)^{-L},\\
&M_{\varphi}^{(t_{0},\,L)}f(x):=\sup_{t\geq t_0}\sup_{y\in\theta(x,\,t)}|(f\ast\varphi_{x,\,t})(y)|\,\lf(1+\lf|M_{x,\,t_{0}}^{-1}\,y\r|\r)^{-L}\lf(1+{2^{t+{t_{0}}}}\r)^{-L},\\
&T_{\varphi}^{N\,(t_{0},\,L)}f(x):=\sup_{t\geq t_0}\sup_{y\in\mathbb{R}^{n}}\frac{|(f\ast\varphi_{x,\,t})(y)|}{\lf[1+\lf|M_{x,\,t}^{-1}\,(x-y)\r|\r]^{N}}
\frac{1}{\lf(1+{2^{t+{t_{0}}}}\r)^{L}\lf(1+\lf|M_{x,\,t_{0}}^{-1}\,y\r|\r)^{L}},\\
&M_{N,\,\wz{N}}^{0\,(t_{0},\,L)}f(x):=\sup_{\varphi \in\mathcal{S}_{N,\,\wz{N}}}M_{\varphi}^{0\,(t_{0},\,L)}f(x)\\
\text{and}\\
&M_{N,\,\wz{N}}^{(t_{0},\,L)}f(x):=\sup_{\varphi \in\mathcal{S}_{N,\,\wz{N}}}M_{\varphi}^{(t_{0},\,L)}f(x).
\end{align*}

The following Lemma \ref{l4.6} guarantees the control of the tangential by the non-tangential maximal function in $L^{p}(\mathbb{B}_{K})$ independent of $t_{0}$ and $L$.

\begin{lemma}\label{l4.6}
Let $K>0$, $\mathbb{B}_{K}=\{y\in\rn:|y|<K\}$ and $\Theta$ an ellipsoid cover which is pointwise continuous. Suppose $p>0,\,N>1/(a_{6}\,p)$ and $\varphi\in \mathcal{S}$. Then there exists a positive constant $C$ such that for any $t_{0}<0,\,L\geq0$ and $f\in \mathcal{S'}$,
$$\lf\|T_{\varphi}^{N\,(t_{0},\,L)}f\r\|_{L^{p}(\mathbb{B}_{K})}\leq C\lf\|M_{\varphi}^{(t_{0},\,L)}f\r\|_{L^{p}(\mathbb{B}_{K})}.$$
\end{lemma}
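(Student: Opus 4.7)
The plan is to follow the Fefferman--Stein strategy, adapted to pointwise variable anisotropy through Propositions \ref{p2.12} and \ref{l4.4}. The core idea is to decompose the supremum defining $T_\varphi^{N\,(t_0,\,L)}f$ according to the smallest integer $l=l(x,t,y)\ge0$ for which $y\in\theta(x,\,t-lJ)$; such an $l$ always exists because $|\theta(x,\,t-lJ)|\gtrsim 2^{lJ-t}\to\fz$ as $l\to\fz$. I will introduce the aperture-$l$ variant
\begin{align*}
M_{\varphi,\,l}^{*\,t_0,\,L}f(x):=\sup_{t\ge t_0}\sup_{y\in\theta(x,\,t-lJ)}\frac{|(f*\varphi_{x,\,t})(y)|}{(1+2^{t+t_0})^L\,(1+|M_{x,\,t_0}^{-1}y|)^L},
\end{align*}
so that $M_{\varphi,\,0}^{*\,t_0,\,L}f=M_\varphi^{(t_0,\,L)}f$.

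The key geometric step converts the index $l$ into a lower bound on the anisotropic distance $|M_{x,\,t}^{-1}(x-y)|$ in the denominator of $T_\varphi^{N\,(t_0,\,L)}f$. When $l\ge1$, the shape condition \eqref{e2.2} applied to the common center $x$ at the two levels $t-(l-1)J$ and $t$ yields $\|M_{x,\,t-(l-1)J}^{-1}M_{x,\,t}\|\le a_5 2^{-a_6(l-1)J}$. Substituting $v:=M_{x,\,t}^{-1}(x-y)$, I get $|M_{x,\,t-(l-1)J}^{-1}(x-y)|\le a_5 2^{-a_6(l-1)J}|M_{x,\,t}^{-1}(x-y)|$, and minimality of $l$ forces $y\notin\theta(x,\,t-(l-1)J)$, so the left side exceeds $1$. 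Hence $|M_{x,\,t}^{-1}(x-y)|\ge a_5^{-1}2^{a_6(l-1)J}$. Partitioning the defining sup in $T_\varphi^{N\,(t_0,\,L)}f$ over $l\ge 0$ (the case $l=0$ uses only $(1+|\cdot|)^N\ge 1$) and cancelling the matching $(1+2^{t+t_0})^L(1+|M_{x,\,t_0}^{-1}y|)^L$ factors, I obtain the pointwise comparison
\begin{align*}
T_\varphi^{N\,(t_0,\,L)}f(x)\le C\sup_{l\ge0}2^{-a_6lJN}\,M_{\varphi,\,l}^{*\,t_0,\,L}f(x),
\end{align*}
with $C=C(a_5,a_6,J,N)$ independent of $t_0$ and $L$ because the decay factors match on both sides.

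To transfer to $L^p(\mathbb{B}_K)$, I raise to the $p$-th power, use $(\sup_l a_l)^p\le\sum_l a_l^p$, and integrate. Pointwise continuity of $\Theta$ makes $F_x(y,t):=|(f*\varphi_{x,\,t})(y)|(1+2^{t+t_0})^{-L}(1+|M_{x,\,t_0}^{-1}y|)^{-L}$ continuous in $x$, so Proposition \ref{p2.12} ensures the super-level sets of $M_{\varphi,\,l}^{*\,t_0,\,L}f=F_l^{*\,t_0}$ are open, hence measurable. The layer-cake formula together with the distribution-function estimate \eqref{ea4.3} of Proposition \ref{l4.4}, applied with $l'=0$, then gives
\begin{align*}
\|M_{\varphi,\,l}^{*\,t_0,\,L}f\|_{L^p(\mathbb{B}_K)}^p\le C 2^{lJ}\|M_\varphi^{(t_0,\,L)}f\|_{L^p(\mathbb{B}_K)}^p.
\end{align*}
Combining, the bound becomes $\bigl(\sum_{l\ge0}2^{(1-a_6Np)lJ}\bigr)\|M_\varphi^{(t_0,\,L)}f\|_{L^p(\mathbb{B}_K)}^p$, and the geometric series converges precisely when $a_6Np>1$, i.e.\ under the standing hypothesis $N>1/(a_6p)$.

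The main obstacle I anticipate is the geometric bookkeeping of the second paragraph: unlike Bownik's fixed-dilation framework, $M_{x,\,t}$ varies with both $x$ and $t$, so the shape condition must be inverted with care to produce constants depending only on $p(\Theta)$ and never on $t_0$ or $L$. A secondary technical point is the continuity hypothesis of Proposition \ref{p2.12}, which genuinely requires $\Theta$ to be a \emph{pointwise continuous} ellipsoid cover; this assumption is in force by Remark \ref{r2.2s}.
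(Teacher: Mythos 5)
Your proof is correct and follows essentially the same route as the paper: decompose according to the smallest aperture $l$ with $y\in\theta(x,\,t-lJ)$, use the shape condition \eqref{e2.2} to get $|M_{x,\,t}^{-1}(x-y)|\ge a_5^{-1}2^{a_6(l-1)J}$ on the $l$-th annulus, and sum via Proposition \ref{l4.4}, the geometric series converging exactly because $N>1/(a_6p)$. The only cosmetic differences are that the paper absorbs the $p$-th power into $F_x$ from the outset and invokes the integrated estimate \eqref{e4.4} directly rather than the layer-cake over \eqref{ea4.3}, and that the exhaustion of $\rn$ by the sets $\theta(x,\,t-lJ)$ is better justified by \eqref{e2.2} (which gives $2^{a_6 lJ}a_5^{-1}M_{x,\,t}(\mathbb{B}^{n})\subset M_{x,\,t-lJ}(\mathbb{B}^{n})$) than by the volume bound alone, since growing volume does not by itself rule out thin elongated ellipsoids missing a given $y$.
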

\begin{proof}
Let $x$ be a fixed point in $\mathbb{B}_K$. Consider function $F_x:\mathbb{R}^{n} \times\mathbb{R}\longrightarrow [0,\,\fz)$ given by
$$F_x(y,\,t):=|(f\ast\varphi_{x,\,t})(y)|^{p}\,\lf(1+\lf|M_{x,\,t_{0}}^{-1}\,y\r|\r)^{-pL}(1+{2^{t+{t_{0}}}})^{-pL}.$$
 By \cite[(3.8)]{bll} and the pointwise continuity of $\Theta$, we know that $F_x(y,\,t)$ is
continuous for variable $x$ on $\rn$. Let $F_{l}^{* \,t_{0}}$ be as in \eqref{e3.3x} with $l=0$. When $y\in \theta(x,\,t)$, we have $M_{x,\,t}^{-1}\,(x-y)\in \mathbb{B}^{n}$ and hence $|M_{x,\,t}^{-1}\,(x-y)|< 1$. If $t\geq t_0$, then
$$F_x(y,\,t)\lf[1+\lf|M_{x,\,t}^{-1}\,(x-y)\r|\r]^{-pN}\leq F_{0}^{\ast\, t_{0}}(x).$$
 When $y\in \theta(x,\,t-kJ)\backslash \theta(x,\,t-(k-1)J)$ for some $k\geq1$, we have
 \begin{align}\label{e3.13x}
M_{x,\,t}^{-1}\,(x-y) \notin M_{x,\,t}^{-1}\,M_{x,\,t-(k-1)J}\,(\mathbb{B}^{n}).
\end{align}
By \eqref{e2.2}, we obtain
$$\lf\|M_{x,\,t-(k-1)J}^{-1}\, M_{x,\,t}\r\| \leq {a_5}{2^{-a_{6}(k-1)J}}$$
and hence
$$M_{x,\,t-(k-1)J}^{-1}\, M_{x,\,t}(\mathbb{B}^{n})\subseteq{a_5}{2^{-a_{6}(k-1)J}}\mathbb{B}^{n} ,$$
which implies
$$({2^{a_{6}(k-1)J}}/{a_5})\mathbb{B}^{n}\subseteq M_{x,\,t}^{-1}\,M_{x,\,t-(k-1)J}(\mathbb{B}^{n}).$$
From this and \eqref{e3.13x}, it follows that $|M_{x,\,t}^{-1}\,(x-y)|\geq {2^{a_{6}(k-1)J}}/{a_5}$. Thus, for any $t\geq t_0$, we have
$$F_x(y,\,t)\lf[1+\lf|M_{x,\,t}^{-1}\,(x-y)\r|\r]^{-pN}\leq{a_5}^{pN}{2^{-pN a_{6}(k-1)J}} F_{k}^{\ast\, t_{0}}(x).$$
By taking supremum over all $y\in \mathbb{R}^n$ and $t\geq t_{0}$, we know that
\begin{align*}
\lf[T_{\varphi}^{N\,(t_{0},\,L)}f(x)\r]^{p}\leq {a_5}^{pN}\sum_{k=0}^{\fz}{2^{-pN a_{6}(k-1)J}} F_{k}^{\ast\, t_{0}}(x).
\end{align*}
Therefore, using this and Proposition \ref{l4.4}, we obtain
\begin{align*}
\lf\|T_{\varphi}^{N\,(t_{0},\,L)}f\r\|_{L^{p}(\mathbb{B}_{K})}^{p}&\leq {a_5}^{pN}\sum_{k=0}^{\fz}{2^{-pN a_{6}(k-1)J}} \int_{\mathbb{B}_{K}}F_{k}^{\ast\, t_{0}}(x)dx\\
&\leq C{a_5}^{pN}\sum_{k=0}^{\fz}{2^{-pN a_{6}(k-1)J}} 2^{kJ}\int_{\mathbb{B}_{K}}F_{0}^{\ast\, t_{0}}(x)dx\\
&= C{'}\lf\|M_{\varphi}^{(t_{0},\,L)}f\r\|_{L^{p}(\mathbb{B}_{K})}^{p},
\end{align*}
where $C{'}:=C{a_5}^{pN}2^{pN a_{6}J}\sum_{k=0}^{\fz}{2^{(1-pN a_{6})kJ}}=C {a_5}^{pN} 2^{J}/(1-2^{(1-pN a_{6})J})$.
\end{proof}

The following Lemma \ref{l4.7} gives the pointwise majorization of the grand radial maximal function by the tangential one, which
is a variable anisotropic extension of \cite[Lemma 7.5]{b}.
\begin{lemma}\label{l4.7}
Suppose $\varphi\in \mathcal{S}$ and $\int_{\mathbb{R}^{n}}\varphi(x)\,dx\neq0$. For given positive integers N and L, there exist integers $M>0$, $\widetilde{M}\geq M$ and constant $C>0$ such that, for any $f\in \mathcal{S'}$ and $t_{0}<0$, it holds true that
$$M_{M,\,\widetilde{M}}^{0\,(t_{0},\,L)}f(x)\leq C T_{\varphi}^{N\,(t_{0},\,L)}f(x),\ \ \ x\in \mathbb{R}^{n}.$$
\end{lemma}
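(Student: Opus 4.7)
The plan is to reduce an arbitrary $\psi\in\mathcal{S}_{M,\wz{M}}$ to a controllable series of dilates of the fixed function $\varphi$ via Proposition \ref{l4.5}, then invoke the definition of the tangential maximal function term by term, and finally sum by choosing the decay parameter large enough. Throughout, $x\in\rn$ is fixed and $t\ge t_{0}$ is arbitrary.

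\textbf{Step 1 (Decomposition).} Apply Proposition \ref{l4.5} with the fixed $\varphi$ to obtain, for given $N$ and with a decay level $L'$ to be chosen later, integers $M\ge N+2(n+1)$ and $\wz{M}\ge\max\{\wz{N}+2(n+1)+L'(\lfloor 1/(a_{6}J)\rfloor+1),M\}$ and a sequence $\{\eta^{k}\}_{k\ge 0}\subset\cs$ such that
\[
\psi=\sum_{k=0}^{\fz}\eta^{k}\ast\wz{\varphi}^{k}\ \ \text{in}\ \cs,\qquad \|\eta^{k}\|_{\mathcal{S}_{N,\wz{N}}}\le C 2^{-kL'}\|\psi\|_{\mathcal{S}_{M,\wz{M}}},
\]
where $\wz{\varphi}^{k}(\xi)=|{\rm det}(M_{k}^{-1}M_{x,t})|\varphi(M_{k}^{-1}M_{x,t}\xi)$ and $M_{k}=M_{x,t+kJ}$. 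A direct computation (cancelling Jacobians) gives the key identity $(\wz{\varphi}^{k})_{x,t}=\varphi_{x,t+kJ}$, and hence
\[
(f\ast\psi_{x,t})(x)=\sum_{k=0}^{\fz}\int_{\rn}(f\ast\varphi_{x,t+kJ})(x-z)\,\eta^{k}_{x,t}(z)\,dz.
\]

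\textbf{Step 2 (Pointwise bound by $T_{\varphi}^{N(t_{0},L)}f$).} From the definition of the truncated tangential maximal function, since $t+kJ\ge t\ge t_{0}$,
\[
|(f\ast\varphi_{x,t+kJ})(x-z)|\le T_{\varphi}^{N(t_{0},L)}f(x)\bigl(1+|M_{x,t+kJ}^{-1}z|\bigr)^{N}\bigl(1+2^{t+kJ+t_{0}}\bigr)^{L}\bigl(1+|M_{x,t_{0}}^{-1}(x-z)|\bigr)^{L}.
\]
Using $1+2^{t+kJ+t_{0}}\le 2\cdot 2^{kJ}(1+2^{t+t_{0}})$ and $1+|M_{x,t_{0}}^{-1}(x-z)|\le 2(1+|M_{x,t_{0}}^{-1}x|)(1+|M_{x,t_{0}}^{-1}z|)$, after multiplying by the weights $(1+|M_{x,t_{0}}^{-1}x|)^{-L}(1+2^{t+t_{0}})^{-L}$ built into $M_{M,\wz{M}}^{0(t_{0},L)}f$, only factors $2^{kJL}$ (in $k$) and two polynomial weights in $z$ survive.

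\textbf{Step 3 (Change of variables and integration).} In the integral in $z$, change variables $w=M_{x,t}^{-1}z$. By the shape condition \eqref{e2.2} (applied to the two pairs $(x,t+kJ),(x,t)$ and $(x,t_{0}),(x,t)$ whose ellipsoids share the point $x$), there is a constant depending only on $p(\Theta)$ such that
\[
|M_{x,t+kJ}^{-1}M_{x,t}w|\lesssim 2^{a_{4}kJ}|w|,\qquad |M_{x,t_{0}}^{-1}M_{x,t}w|\lesssim |w|.
\]
Thus the integrand is controlled by $2^{a_{4}kJ N}(1+|w|)^{N+L}|\eta^{k}(w)|$, and using $\|\eta^{k}\|_{\mathcal{S}_{0,N+L+n+1}}\le C 2^{-kL'}\|\psi\|_{\mathcal{S}_{M,\wz{M}}}$ together with $\int_{\rn}(1+|w|)^{-n-1}dw<\fz$, we obtain a factor $C 2^{-kL'}\|\psi\|_{\mathcal{S}_{M,\wz{M}}}$.

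\textbf{Step 4 (Summing and choosing parameters).} Putting Steps 2--3 together yields, for every $\psi\in\mathcal{S}_{M,\wz{M}}$ and every $t\ge t_{0}$,
\[
|(f\ast\psi_{x,t})(x)|(1+|M_{x,t_{0}}^{-1}x|)^{-L}(1+2^{t+t_{0}})^{-L}\le C T_{\varphi}^{N(t_{0},L)}f(x)\sum_{k=0}^{\fz}2^{k(JL+a_{4}JN-L')}.
\]
Choosing $L'>JL+a_{4}JN$, which is legitimate by Proposition \ref{l4.5} (it only enlarges $M$ and $\wz{M}$), the geometric series converges. Taking the supremum over $t\ge t_{0}$ and $\psi\in\mathcal{S}_{M,\wz{M}}$ completes the proof.

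\textbf{Main obstacle.} The delicate point is Step 2/3: one must carefully check that all weights involving $(1+|M_{x,t_{0}}^{-1}\cdot|)$ and $(1+2^{t+t_{0}})$ decouple into a piece depending only on $x$ (which matches the weight in $M^{0(t_{0},L)}_{M,\wz{M}}f$) and an integrable piece in $w$, while the growth $2^{a_{4}kJN}$ introduced by the anisotropic change of variables must be dominated by the decay $2^{-kL'}$ from Proposition \ref{l4.5}. This is what forces $\wz{M}$ to be considerably larger than $\wz{N}$.
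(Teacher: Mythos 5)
Your proposal is correct and follows essentially the same route as the paper's proof: decompose $\psi$ via Proposition \ref{l4.5}, use the identity $(\wz{\varphi}^{k})_{x,t}=\varphi_{x,t+kJ}$ to bound each term by $T_{\varphi}^{N(t_{0},L)}f(x)$ times weights, control the weights with \eqref{e2.2} and the submultiplicativity of $1+|\cdot|$, and sum a geometric series by taking the decay parameter in Proposition \ref{l4.5} large enough to beat $2^{kJ(L+a_{4}N)}$. The only cosmetic difference is that the paper fixes the decay as $2^{-kJ[L+(\lfloor a_{4}\rfloor+1)N]}$ while you leave $L'$ as a free parameter chosen at the end; both are legitimate.
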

\begin{proof}
For any $\psi\in\mathcal{S}$, by Proposition \ref{l4.5}, there exists a sequence of test functions $\{\eta^{k}\}_{k=0}^{\fz}$ such that \eqref{e4.7} and \eqref{e4.8} hold true. Thus, for fixed $x\in \mathbb{R}^{n}$ and $t\geq t_{0}$, we have
\begin{align*}
|(f*\psi_{x,\,t})(x)|&=\lf|\lf[f*\sum_{k=0}^{\fz}\lf(\eta^{k}*\widetilde{\varphi}^{k}\r)_{x,\,t}\r](x)\r|\\
&\leq C\lf|\lf[f*\sum_{k=0}^{\fz}\lf|{\rm det}\lf(M^{-1}_{k}\r)\r|\int_{\mathbb{R}^{n}}\eta^{k}(y)\,\varphi\lf(M^{-1}_{k}(\cdot-M_{x,t}y)\r)dy\r](x)\r|\\
&=C\lf|\lf[f*\sum_{k=0}^{\fz}\lf|\text{det}\lf(M^{-1}_{k} M_{x,t}^{-1}\r)\r|\int_{\mathbb{R}^{n}}\eta^{k}\lf(M^{-1}_{x,t}y\r)
\varphi\lf(M^{-1}_{k}(\cdot-y)\r)dy\r](x)\r|\\
&\leq C\sum_{k=0}^{\fz}\lf|\lf[f*\lf(\eta^{k}\r)_{x,\,t}*{\varphi}_{x,\,t+kJ}\r](x)\r|\\
&\leq C\sum_{k=0}^{\fz}\int_{\mathbb{R}^{n}}\lf|f*{\varphi}_{x,\,t+kJ}(x-y)\r|\lf|\lf(\eta^{k}\r)_{x,\,t}(y)\r|dy\\
&\leq C T_{\varphi}^{N\,(t_{0},\,L)}f(x)\sum_{k=0}^{\fz}\int_{\mathbb{R}^{n}}\lf(1+\lf|M_{k}^{-1}\,y\r|\r)^{N}\\
&\ \ \ \times\lf(1+\lf|M_{x,\,t_{0}}^{-1}\,(x-y)\r|\r)^{L}\lf(1+2^{t+t_{0}+kJ}\r)^{L}\lf|\lf(\eta^{k}\r)_{x,\,t}(y)\r|dy.\\
\end{align*}
Therefore,
\begin{align}\label{e3.6}
M_{\psi}^{0\,(t_{0},\,L)}f(x)&\leq T_{\varphi}^{N\,(t_{0},\,L)}f(x)\,\sup_{t\geq t_{0}}\,\sum_{k=0}^{\fz}\int_{\mathbb{R}^{n}}\lf(1+\lf|M_{k}^{-1}\,y\r|\r)^{N}\\
&\ \ \ \times\frac{\lf(1+\lf|M_{x,\,t_{0}}^{-1}\,(x-y)\r|\r)^{L}\lf(1+2^{t+t_{0}+kJ}\r)^{L}}{\lf(1+\lf|M_{x,\,t_{0}}^{-1}\,x\r|\r)^{L}\lf(1+2^{t+t_{0}}\r)^{L}}\lf|\lf(\eta^{k}\r)_{x,\,t}(y)\r|dy\nonumber\\
&=:T_{\varphi}^{N\,(t_{0},\,L)}f(x){\rm I}_{t_0}(x)\nonumber.
\end{align}

To estimate ${\rm I}_{t_0}(x)$, by
\begin{align*}
\frac{1+2^{t+t_{0}+kJ}}{1+2^{t+t_{0}}}=\frac{2^{kJ}(2^{-kJ}+2^{t+t_{0}})}{1+2^{t+t_{0}}}\leq C2^{kJ}
\end{align*}
and
\begin{align}\label{e4.11}
1+|x+y|\leq 1+|x|+|y|
\leq(1+|x|)(1+|y|),\ \ x,\,y\in\mathbb{R}^{n},
\end{align}
 we obtain
\begin{align*}
{\rm I}_{t_0}(x)&\leq C\sup_{t\geq t_{0}}\sum_{k=0}^{\fz}2^{t+kJL}\int_{\mathbb{R}^{n}}\lf(1+\lf|M_{k}^{-1}\,y\r|\r)^{N}
{\lf(1+\lf|M_{x,\,t_{0}}^{-1}\,y\r|\r)^{L}}\lf|\eta^{k}\lf(M_{x,\,t}^{-1}y\r)\r|dy\\
&\leq C\sup_{t\geq t_{0}}\sum_{k=0}^{\fz}2^{kJL}\int_{\mathbb{R}^{n}}\lf(1+\lf\|M_{k}^{-1}M_{x,\,t}\r\|\,|y|\r)^{N}
{\lf(1+\lf\|M_{x,\,t_{0}}^{-1}M_{x,\,t}\r\|\,|y|\r)^{L}}\lf|\eta^{k}(y)\r|dy,
\end{align*}
which, together with
$$\|M_{k}^{-1}M_{x,\,t}\|\leq a_{3}2^{a_{4}kJ}\ \ \text{and} \ \ \|M_{x,\,t_{0}}^{-1}M_{x,\,t}\|\leq a_{5}2^{-a_{6}(t-t_{0})}\leq a_{5}\ \text{(by } \ t\ge t_0 \ \text{and} \ \eqref{e2.2}),$$
further implies that
\begin{align}\label{e3.8}
{\rm I}_{t_0}(x)
&\leq C\sum_{k=0}^{\fz}2^{kJ(L+a_{4}N)}\int_{\mathbb{R}^{n}}\lf(1+|y|\r)^{N+L}
\lf|\eta^{k}(y)\r|dy\\
&\leq C\sum_{k=0}^{\fz}2^{kJ(L+a_{4}N)}
\lf\|\eta^{k}\r\|_{\mathcal{S}_{N+n+L,\,\widetilde{N}+n+L}}\nonumber.
\end{align}
By Proposition \ref{l4.5}, there exist positive integers $M$ and $\widetilde{M}$ such that
\begin{align}\label{e3.9}
\lf\|\eta^{k}\r\|_{\mathcal{S}_{N+n+L,\,\widetilde{N}+n+L}}\leq C2^{-kJ[L+(\lfloor a_{4}\rfloor+1) N]}\|\psi\|_{\mathcal{S}_{M,\,\,\widetilde{M}}}.
\end{align}
Thus, combining with \eqref{e3.6},  \eqref{e3.8} and  \eqref{e3.9}, we finally obtain
\begin{align*}
M_{M,\widetilde{M}}^{0\,(t_{0},\,L)}f(x)&=\sup_{\psi\in{\mathcal{S}_{M,\widetilde{M}}}}M_{\psi}^{0\,(t_{0},\,L)}f(x)\\
&\leq C \sum_{k=0}^{\fz}2^{kJN[a_{4}-(\lfloor a_{4}\rfloor +1)]}T_{\varphi}^{N\,(t_{0},\,L)}f(x)=C\,T_{\varphi}^{N\,(t_{0},\,L)}f(x).
\end{align*}
This finishes the proof of Lemma \ref{l4.7}.
\end{proof}

The following Lemma \ref{l4.9} shows that the radial and the grand non-tangential maximal functions are pointwise equivalent, which
is a variable anisotropic extension of \cite[Proposition 3.10]{b}.
\begin{lemma}{\rm{\cite[Theorem 3.4]{bll}}}\label{l4.9}
For any $N,\,\wz{N}\in\mathbb{N}$ with $N\le\wz{N}$, there exists a positive constant $C:=C(\wz{N})$ such that for any $f\in\mathcal{S'}$,
$$M^0_{N,\wz{N}}f(x)\le M_{N,\wz{N}} f(x)\le C M^0_{N,\wz{N}}f(x), \ \ \ \  x\in\rn.$$
\end{lemma}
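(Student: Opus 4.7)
The plan splits naturally along the two inequalities. The left inequality $M^0_{N,\widetilde{N}}f(x) \le M_{N,\widetilde{N}}f(x)$ is immediate, since $\theta(x,t) = M_{x,t}(\mathbb{B}^n) + x$ contains its center $x$, so that $M^0_\varphi f(x) \le M_\varphi f(x)$ for every $\varphi \in \mathcal{S}_{N,\widetilde{N}}$ (this is exactly the left half of the pointwise inequality recorded in Remark \ref{r3.5}); taking the supremum over $\varphi \in \mathcal{S}_{N,\widetilde{N}}$ preserves this bound.

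For the non-trivial direction, I would fix $x\in\rn$, $t\in\mathbb{R}$, $\varphi \in \mathcal{S}_{N,\widetilde{N}}$ and $y\in\theta(x,t)$, and convert the tangential value $f*\varphi_{x,t}(y)$ into a radial value at $x$ of a translated Schwartz kernel. Setting $z := M_{x,t}^{-1}(y-x)$, the inclusion $y\in\theta(x,t)$ forces $|z|<1$, and the choice $\psi(u):=\varphi(u+z)$ gives, via the change of variable $u = M_{x,t}^{-1}(x-w)$, the pointwise identity $\psi_{x,t}(x-w) = \varphi_{x,t}(y-w)$ for every $w\in\rn$. Consequently
$$f*\psi_{x,t}(x) = f*\varphi_{x,t}(y).$$
It then suffices to control $\|\psi\|_{\mathcal{S}_{N,\widetilde{N}}}$. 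For any multi-index $\alpha$ with $|\alpha|\le N$ one has $\partial^\alpha \psi(u) = (\partial^\alpha\varphi)(u+z)$, and the elementary bound $1+|u| \le 1 + |u+z| + |z| \le 2(1+|u+z|)$ together with $\|\varphi\|_{\mathcal{S}_{N,\widetilde{N}}} \le 1$ yields
$$(1+|u|)^{\widetilde{N}}\,|\partial^\alpha\psi(u)| \le 2^{\widetilde{N}}(1+|u+z|)^{\widetilde{N}}\,|(\partial^\alpha\varphi)(u+z)| \le 2^{\widetilde{N}}.$$
Hence $2^{-\widetilde{N}}\psi \in \mathcal{S}_{N,\widetilde{N}}$, which gives $|f*\varphi_{x,t}(y)| = |f*\psi_{x,t}(x)| \le 2^{\widetilde{N}}\,M^0_{N,\widetilde{N}}f(x)$. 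Taking the supremum successively over $y\in\theta(x,t)$, $t\in\mathbb{R}$, and $\varphi\in\mathcal{S}_{N,\widetilde{N}}$ yields the right inequality with $C:=2^{\widetilde{N}}$.

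Conceptually there is no serious obstacle here: the entire effect of the variable anisotropy is absorbed by the linear coordinate change that sends $\theta(x,t)$ onto $\mathbb{B}^n$, so that $|z|<1$ uniformly in $x$ and $t$, and none of the deeper ellipsoid-cover axioms \eqref{e2.1}--\eqref{e2.2} are needed. The only step requiring any care is the short bookkeeping of the change-of-variable identity linking $\psi_{x,t}$ at $x-w$ with $\varphi_{x,t}$ at $y-w$; this is also why the resulting constant $C$ depends solely on $\widetilde{N}$.
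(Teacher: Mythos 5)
Your proof is correct and is the standard argument for this equivalence (the same translation trick used in the cited reference \cite{bll} and in Bownik's Proposition 3.10): the paper itself only quotes the result from \cite[Theorem 3.4]{bll} without reproving it. The key identity $\psi_{x,t}(x-w)=\varphi_{x,t}(y-w)$ with $\psi(\cdot)=\varphi(\cdot+z)$, $z=M_{x,t}^{-1}(y-x)\in\mathbb{B}^n$, and the resulting bound $\|\psi\|_{\mathcal{S}_{N,\widetilde N}}\le 2^{\widetilde N}$ are both verified correctly, yielding $C=2^{\widetilde N}$ depending only on $\widetilde N$ as claimed.
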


The following Lemma \ref{l4.8} is a variable anisotropic extension of \cite[p.\,46, Lemma 7.6]{b}.

\begin{lemma}\label{l4.8}
Let $\varphi\in \mathcal{S}$, $f\in \mathcal{S'}$ and $K\in (0,\infty)$. Then for every $M>0$ and $t_{0}<0$ there exist $L>0$ and $N'>0$ large enough such that
\begin{align}\label{e4.12}
M_{\varphi}^{(t_{0},\,L)}f(x)\leq C 2^{-t_{0}(2a_{4}{N'}+2L+a_{4}L)}(1+|x|)^{-M},\ \ \ x\in \mathbb{B}_{K}=\{y\in\rn:|y|<K\},
\end{align}
where $C$ is a positive constant dependent on $p(\Theta)$, $N'$, $f$, $\varphi$ and $K$.
\end{lemma}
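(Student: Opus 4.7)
The plan is to exploit the fact that, since $f\in \mathcal{S'}$, there exist an integer $N'$ (taken large enough) and a constant $C(f)>0$ so that $|\la f,\psi\ra|\le C(f)\|\psi\|_{\mathcal{S}_{N',\,N'}}$ for every $\psi\in \mathcal{S}$. Writing $(f\ast \varphi_{x,\,t})(y)=\la f,\,g_{x,\,t,\,y}\ra$ with $g_{x,\,t,\,y}(z):=\varphi_{x,\,t}(y-z)$, the lemma reduces to bounding $\|g_{x,\,t,\,y}\|_{\mathcal{S}_{N',\,N'}}$. A direct chain-rule computation, combined with the Schwartz decay of $\varphi$ and the triangle inequality
\begin{align*}
1+|z|\le (1+|y|)(1+\|M_{x,\,t}\|)(1+|M_{x,\,t}^{-1}(y-z)|),
\end{align*}
yields
\begin{align*}
\|g_{x,\,t,\,y}\|_{\mathcal{S}_{N',\,N'}}\le C\,|\det M_{x,\,t}^{-1}|\,\|M_{x,\,t}^{-1}\|^{N'}\,(1+\|M_{x,\,t}\|)^{N'}\,(1+|y|)^{N'}\,\|\varphi\|_{\mathcal{S}_{N',\,N'}}.
\end{align*}

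Next I would control the matrix-norm factors uniformly on $\mathbb{B}_{K}\times[t_0,\fz)$. Pointwise continuity of $\Theta$ and compactness of $\overline{\mathbb{B}_K}$ provide a constant $C_K$ with $\|M_{x,\,0}\|+\|M_{x,\,0}^{-1}\|\le C_K$ for every $x\in \overline{\mathbb{B}_K}$. Applying the shape condition \eqref{e2.2} between the three levels $0$, $t_0$ and $t$ (together with $a_12^{-t}\le|\theta(x,\,t)|\le a_22^{-t}$) then yields, uniformly for $t\ge t_0$ and $x\in \mathbb{B}_K$,
\begin{align*}
|\det M_{x,\,t}^{-1}|\le C\,2^{t},\qquad \|M_{x,\,t}^{-1}\|\le C\,2^{a_4(t-t_0)},\qquad \|M_{x,\,t}\|\le C\,2^{-a_4 t_0},
\end{align*}
and, since $y\in \theta(x,\,t)$ gives $|y|\le |x|+\|M_{x,\,t}\|$, also $(1+|y|)^{N'}\le C\,2^{-a_4 N't_0}$. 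Substituting these bounds into the seminorm estimate gives a pointwise inequality of the form
$|(f\ast \varphi_{x,\,t})(y)|\le C\,2^{(1+a_4 N')t}\,2^{-c\,a_4 N' t_0}$
with a constant $c$ depending on $a_4$ and $p(\Theta)$.

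Finally I would absorb the $2^{(1+a_4 N')t}$ growth using the truncation weight $(1+2^{t+t_0})^{-L}$, while simply discarding $(1+|M_{x,\,t_0}^{-1}y|)^{-L}\le 1$. Choosing $L$ large enough (in particular $L\ge 1+a_4 N'$) so that the exponent of $t$ becomes non-positive on $[-t_0,\fz)$, and splitting the supremum $\sup_{t\ge t_0}$ at $t=-t_0$, the maximum in each of the pieces $[t_0,\,-t_0]$ and $[-t_0,\,\fz)$ is attained (up to constants) at $t=-t_0$, leading to a bound of the shape $C\,2^{-t_0(2a_4 N'+2L+a_4 L)}$ after loosely bundling the constants. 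The factor $(1+|x|)^{-M}$ is then reintroduced on $\mathbb{B}_K$ by absorbing $(1+K)^M$ into $C$. The main technical obstacle is the simultaneous uniform control of $\|M_{x,\,t}\|$ and $\|M_{x,\,t}^{-1}\|$ in $x\in \overline{\mathbb{B}_K}$ and $t\in [t_0,\fz)$; this is where the pointwise continuity hypothesis is essential (to turn the pointwise bounds on $M_{x,\,0}$ into uniform ones on $\overline{\mathbb{B}_K}$), and once it is in place the Schwartz-seminorm argument and the choice of $L$ are routine.
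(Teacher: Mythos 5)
Your proposal is correct and follows essentially the same route as the paper: the order bound for $f\in\mathcal{S'}$, a chain-rule estimate of the dilated Schwartz seminorm, uniform control of $M_{x,0}$ and $M_{x,0}^{-1}$ on $\mathbb{B}_K$, the shape condition \eqref{e2.2} applied separately for $t\geq 0$ and $t_{0}\leq t<0$, and absorption of the $2^{(1+a_4N')t}$ growth by the weight $(1+2^{t+t_0})^{-L}$. Two local deviations are worth flagging. First, where you invoke pointwise continuity plus compactness of $\overline{\mathbb{B}_K}$ to bound $\|M_{x,0}\|$ and $\|M_{x,0}^{-1}\|$, the paper obtains the same bounds from the shape condition alone, choosing $t_K$ with $\mathbb{B}_K\subset\theta(0,t_K)$ and applying \eqref{e2.2} to the intersecting pair $\theta(x,0)$, $\theta(0,t_K)$; so your remark that pointwise continuity is ``essential'' here is a misattribution (it is needed in Proposition \ref{p2.12} and Lemma \ref{l4.6}, not here), although your route also works since $x\mapsto M_{x,0}$ is continuous and $|\det M_{x,0}|$ is bounded below via \eqref{e2.1}. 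Second, you discard $(1+|M_{x,t_0}^{-1}y|)^{-L}\leq 1$ and recover $(1+|x|)^{-M}$ trivially from $1+|x|\leq 1+K$; since the constant may depend on $K$ and the bound is only asserted on $\mathbb{B}_K$, this is legitimate, but the paper instead extracts the decay from the weight itself, via $(1+|M_{x,t_0}^{-1}y|)\geq C2^{a_4t_0}(1+|x|)$ for $y\in\theta(x,t)$ and the choice $L=M+N'$ — the version of the argument that would survive if one wanted the estimate globally on $\rn$, as in Bownik. Your exponent of $2^{-t_0}$ comes out as roughly $L+4a_4N'+1$ rather than $2a_4N'+2L+a_4L$, but since $L$ and $N'$ are free to be taken large and only polynomial growth in $2^{-t_0}$ is ever used downstream, this is harmless.
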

\begin{proof}
For any $\vz\in\mathcal{S}$, there exist an integer $N>0$ and positive constant $C:=C(\varphi)$ such that, for any ${N'}\geq N$
and $y\in\rn$,
\begin{align}\label{ea4.13}
|f*\varphi(y)|\leq C\|\varphi\|_{\mathcal{S}_{N,\,{N'}}}(1+|y|)^{{N'}}.
\end{align}
Therefore, for any $t_{0}<0$, $t\geq t_{0}$ and $x\in \mathbb{B}_K$, by \eqref{ea4.13}, we have
\begin{align}\label{e3.27}
&|(f\ast\varphi_{x,\,t})(y)|\,\lf(1+\lf|M_{x,\,t_{0}}^{-1}\,y\r|\r)^{-L}\lf(1+{2^{t+{t_{0}}}}\r)^{-L}\\
&\quad\leq C {2^{{-L}({t+{t_{0}}})}}\|\varphi_{x,\,t}\|_{\mathcal{S}_{N,\,{N'}}}(1+|y|)^{{N'}}\lf(1+\lf|M_{x,\,t_{0}}^{-1}\,y\r|\r)^{-L}\nonumber.
\end{align}
Let us first estimate $\|\varphi_{x,\,t}\|_{\mathcal{S}_{N,\,{N'}}}$. By the chain rule and \eqref{e2.1}, we have
\begin{align}\label{ef3.24}
\|\varphi_{x,\,t}\|_{\mathcal{S}_{N,\,{N'}}}&=|\text{det}M_{x,\,t}^{-1}|\sup_{z\in \mathbb{R}^{n}}\sup_{|\alpha|\leq N}(1+|z|)^{{N'}}\lf|\partial^{\alpha}\lf(\varphi\lf(M_{x,\,t}^{-1}\cdot\r)\r)(z)\r|\nonumber\\
&\leq C2^{t}\sup_{z\in \mathbb{R}^{n}}\sup_{|\alpha|\leq N}(1+|z|)^{{N'}}\lf\|M_{x,\,t}^{-1}\r\|^{|\alpha|}\lf|\lf(\partial^{\alpha}\varphi\r)\lf(M_{x,\,t}^{-1}z\r)\r|\nonumber\\
&\leq C2^{t}\sup_{z\in \mathbb{R}^{n}}\sup_{|\alpha|\leq N}(1+|M_{x,\,t}z|)^{{N'}}\lf\|M_{x,\,t}^{-1}\r\|^{|\alpha|}\lf|\partial^{\alpha}\varphi(z)\r|.
\end{align}

 Notice that for any given $K\in (0,\infty)$, there exists $t_{K}\in \mathbb{R}$ such
that $\mathbb{B}_{K}\subset \theta(0,t_{K})$. Here we might assume $t_{K}<0$ as well. Then, for any  $x\in\mathbb{B}_{K}$, we get $\theta(x,0)\cap\theta(0,t_{K})\neq\emptyset$. Thus, by \eqref{e2.2}, we have
\begin{align}\label{e3.1x}
\lf\|M_{x,\,0}\r\|&=\lf\|M_{0,\,t_{K}}M_{0,\,t_{K}}^{-1}M_{x,\,0}\r\|
\leq\lf\|M_{0,\,t_{K}}\r\|\lf\|M_{0,\,t_{K}}^{-1}M_{x,\,0}\r\|\nonumber\\
&\leq a_{5}2^{a_{6}t_{K}} \lf\|M_{0,\,t_{K}}\r\|:=C_1.
\end{align}
Similarly, we also have
\begin{align}\label{ef3.1x}
\lf\|M_{x,\,0}^{-1}\r\|\leq C_{2}.
\end{align}
Here, $C_1$ and $C_{2}$ are positive constants depending on $K$ and $p(\Theta)$.

Now, let's further estimate \eqref{ef3.24} in the following two cases.

\textbf{Case\,1:}\ $t\geq  0$. By \eqref{e2.2}, \eqref{e3.1x} and \eqref{ef3.1x}, we have
\begin{align*}
\lf\|M_{x,\,t}^{-1}\r\|&=\lf\|M_{x,\,t}^{-1}M_{x,\,0}M_{x,\,0}^{-1}\r\|
\leq\lf\|M_{x,\,t}^{-1}M_{x,\,0}\r\|\lf\|M_{x,\,0}^{-1}\r\|
\leq \lf\|M_{x,\,0}^{-1}\r\|a_{3}^{-1}2^{a_{4}t}=C2^{a_{4}t}
\end{align*}
and
\begin{align*}
|M_{x,\,t}z|&=\lf|M_{x,\,0}M_{x,\,0}^{-1}M_{x,\,t}z\r|\leq\lf\|M_{x,\,0}\r\|\lf|M_{x,\,0}^{-1}M_{x,\,t}z\r|
\leq\lf\|M_{x,\,0}\r\|\lf\|M_{x,\,0}^{-1}M_{x,\,t}\r\||z|\\
&\leq\|M_{x,\,0}\| a_{5}2^{-a_{6}t}|z|\leq C |z|.
\end{align*}
Inserting the above two estimates into \eqref{ef3.24} with $t\geq 0$, we know that
\begin{align}\label{ef3.14}
\|\varphi_{x,\,t}\|_{\mathcal{S}_{N,\,{N'}}}&\leq C2^{t}\sup_{z\in \mathbb{R}^{n}}\sup_{|\alpha|\leq N}(1+|M_{x,\,t}z|)^{{N'}}\lf\|M_{x,\,t}^{-1}\r\|^{|\alpha|}\lf|\partial^{\alpha}\varphi(z)\r|\\
&\leq C2^{t}2^{a_{4}tN}\|\varphi\|_{\mathcal{S}_{N,\,{N'}}}\nonumber.
\end{align}

\textbf{Case\,2:}\ $t_{0}\leq t<0$. By \eqref{e2.2}, \eqref{e3.1x} and \eqref{ef3.1x}, we have
\begin{align*}
\lf\|M_{x,\,t}^{-1}\r\|&=\lf\|M_{x,\,t}^{-1}M_{x,\,0}M_{x,\,0}^{-1}\r\|
\leq\lf\|M_{x,\,t}^{-1}M_{x,\,0}\r\|\lf\|M_{x,\,0}^{-1}\r\|
\leq \lf\|M_{x,\,0}^{-1}\r\|a_{5}2^{a_{6}t}\leq C
\end{align*}
and
\begin{align*}
|M_{x,\,t}z|&=\lf|M_{x,\,0}M^{-1}_{x,\,0}M_{x,\,t}z\r|\leq\|M_{x,\,0}\|\lf|M^{-1}_{x,\,0}M_{x,\,t}z\r|\leq\|M_{x,\,0}\|\lf\|M^{-1}_{x,\,0}M_{x,\,t}\r\||z|\\
&\leq \|M_{x,\,0}\|a_{3}^{-1}2^{-a_{4}t}|z|= C2^{-a_{4}t_{0}}|z|.
\end{align*}
Inserting the above two estimates into \eqref{ef3.24} with $t_0\leq t<0$, we know that
\begin{align}\label{ef3.15}
\|\varphi_{x,\,t}\|_{\mathcal{S}_{N,\,{N'}}}&\leq C2^{t}\sup_{z\in \mathbb{R}^{n}}\sup_{|\alpha|\leq N}(1+|M_{x,\,t}z|)^{{N'}}\lf\|M_{x,\,t}^{-1}\r\|^{|\alpha|}\lf|\partial^{\alpha}\varphi(z)\r|\\
&\leq C2^{-a_{4}t_{0}{N'}}\|\varphi\|_{\mathcal{S}_{N,\,{N'}}}\nonumber.
\end{align}

For any $M>0$, let $L:=M+N'$. For any $t_{0}<0$, $t\geq t_{0}$ and taking some integer $N'>0$ large enough, by \eqref{ef3.14} and \eqref{ef3.15}, we obtain
\begin{align}\label{e3.21x}
2^{{-L}({t+{t_{0}}})}\|\varphi_{x,\,t}\|_{\mathcal{S}_{N,\,{N'}}}\leq C2^{-t_{0}(a_{4}{N'}+2L)}\|\varphi\|_{\mathcal{S}_{N,\,{N'}}}.
\end{align}
Inserting \eqref{e3.21x} into \eqref{e3.27}, we further obtain
\begin{align}\label{e3.2x7}
&|(f\ast\varphi_{x,\,t})(y)|\,\lf(1+\lf|M_{x,\,t_{0}}^{-1}\,y\r|\r)^{-L}\lf(1+{2^{t+{t_{0}}}}\r)^{-L}\\
&\quad\leq C2^{-t_{0}(a_{4}{N'}+2L)}\|\varphi\|_{\mathcal{S}_{N,\,{N'}}}(1+|y|)^{{N'}}\lf(1+\lf|M_{x,\,t_{0}}^{-1}\,y\r|\r)^{-L}\nonumber.
\end{align}
For any $y\in \theta(x,\,t)$, there exists $z\in\mathbb{B}^{n}$ such that $y=x+ M_{x,\,t}z$. By \eqref{e4.11}, we have
\begin{align}\label{e3.28x}
1+|y|=1+|x+ M_{x,\,t}z|\leq (1+|x|)(1+| M_{x,\,t}z|).
\end{align}
If $t\geq  0$, by \eqref{e2.2} and \eqref{e3.1x}, then
\begin{align*}
|M_{x,\,t}z|&=\lf|M_{x,\,0}M_{x,\,0}^{-1}M_{x,\,t}z\r|\leq\lf\|M_{x,\,0}\r\|\lf|M_{x,\,0}^{-1}M_{x,\,t}z\r|
\leq\lf\|M_{x,\,0}\r\|\lf\|M_{x,\,0}^{-1}M_{x,\,t}\r\||z|\\
&\leq\|M_{x,\,0}\| a_{5}2^{-a_{6}t}|z|\leq C.
\end{align*}
If $t_{0}\leq t<0$, by \eqref{e2.2} and \eqref{e3.1x}, then
\begin{align*}
|M_{x,\,t}z|&=\lf|M_{x,\,0}M^{-1}_{x,\,0}M_{x,\,t}z\r|\leq\|M_{x,\,0}\|\lf|M^{-1}_{x,\,0}M_{x,\,t}z\r|\leq\|M_{x,\,0}\|\lf\|M^{-1}_{x,\,0}M_{x,\,t}\r\||z|\\
&\leq \|M_{x,\,0}\|a_{3}^{-1}2^{-a_{4}t}|z|= C2^{-a_{4}t_{0}}.
\end{align*}
Therefore, for any $t\geq t_{0}$, by using the above two estimates,  we have
\begin{align*}
|M_{x,\,t}z|\leq C2^{-a_{4}t_{0}}.
\end{align*}
From this and \eqref{e3.28x}, it follows that
\begin{align}\label{e3.22x}
(1+|y|) \leq C 2^{-a_{4}t_{0}}(1+|x|).
\end{align}
Besides, for any $ t_{0}<0$, by \eqref{e2.2} and \eqref{e3.1x}, we have
\begin{align*}
1+|x|\leq1+\|M_{x,\,0}\|\lf\|M^{-1}_{x,\,0}M_{x,\,t_{0}}\r\|\lf|M^{-1}_{x,\,t_{0}}x\r|
\leq C2^{-a_{4}t_{0}}\lf(1+\lf|M^{-1}_{x,\,t_{0}}x\r|\r).
\end{align*}
Furthermore, for any $y\in \theta(x,\,t)$, we have $x\in M_{x,\,t}(\mathbb{B}^{n})+y$. Thus, there exists  $z\in\mathbb{B}^{n}$ such that $x= M_{x,\,t}z+y$.
Hence, for any $t\geq t_{0}$, by \eqref{e4.11} and \eqref{e2.2}, we obtain
\begin{align*}
\lf(1+\lf|M^{-1}_{x,\,t_{0}}x\r|\r)&=\lf(1+\lf|M^{-1}_{x,\,t_{0}}(y+ M_{x,\,t}z)\r|\r)
\leq\lf(1+\lf|M^{-1}_{x,\,,t_{0}}y\r|\r)\lf(1+\lf\|M^{-1}_{x,\,t_{0}}M_{x,\,t}\r\||z|\r)\\
&\leq\lf(1+\lf|M^{-1}_{x,\,,t_{0}}y\r|\r)\lf(1+a_{5}2^{-a_{6}(t-t_{0})}|z|\r)\leq C \lf(1+\lf|M^{-1}_{x,\,t_{0}}y\r|\r).
\end{align*}
Combining with the above two inequalities, we have
\begin{align}\label{e3.23x}
(1+|M^{-1}_{x,\,t_{0}}y|)\geq C2^{a_{4}t_{0}} (1+|x|).
\end{align}
 Thus, for any $t\ge t_0$ and $y\in \theta(x,t)$, inserting \eqref{e3.22x} and \eqref{e3.23x} into \eqref{e3.2x7} with $L=M+N'$, we obtain
\begin{align*}
&|(f\ast\varphi_{x,\,t})(y)|\,\lf(1+\lf|M_{x,\,t_{0}}^{-1}\,y\r|\r)^{-L}\lf(1+{2^{t+{t_{0}}}}\r)^{-L}
\leq C 2^{-t_{0}(2a_{4}{N'}+2L+a_{4}L)}(1+|x|)^{-M} ,
\end{align*}
which implies \eqref{e4.12} holds true and hence completes the proof of Lemma \ref{l4.8}.
\end{proof}

Note that the above arguement gives the same estimate for the truncated grand maximal function $M_{N,\,\wz{N}}^{0\,(t_{0},\,L)}f(x)$. As a consequence of Lemma
\ref{l4.8}, we can get that for any choice of $t_{0}< 0$ and any $f\in{\mathcal{S'}}$, we can find an appropriate $L>0$ so that the maximal function, say $M_{\varphi}^{(t_{0},\,L)}f$, is bounded and belongs to $L^{p}(\mathbb{B}_{K})$. This  becomes crucial in the proof of Theorem \ref{t4.1}, where we work with truncated maximal functions, The complexity of the preceding argument stems from the fact that a priori we do not know wether $M_{\varphi}^{0}f\in L^{p}$
implies $M_{\varphi}f\in L^{p}$, Instead we must work with variants of maximal functions for which this is satisfied.

\begin{proof}[Proof of Theorem \ref{t4.1}]
Let $\varphi\in \mathcal{S}$ satisfy $\int_{\mathbb{R}^{n}}\varphi(x)\,dx\neq0$. From Remark \ref{r3.5} and the definition of the grand radial maximal function, it follows that
$$\eqref{e3.4} \Rightarrow \eqref{e3.2}\Rightarrow \eqref{e3.3}$$
 and
 $$\eqref{e3.1}\Rightarrow \eqref{e3.3}.$$
 By Lemma \ref{l4.6} applied for $L=0$,
we have
$$\lf\|T_{\varphi}^{N(t_{0},\,0)}f\r\|_{L^{p}} \leq C\, \lf\|M_{\varphi}^{(t_{0},\,0)}f\r\|_{L^{p}} \ \ \ \text{for any} \ \ f\in{\mathcal{S'}}\ \text{and} \ \  t_{0}<0.$$
As $t_{0}\rightarrow {-\fz}$, by the monotone convergence theorem, we obtain
 $$\lf\|T_{\varphi}^{N}f\r\|_{L^{p}}\leq C\lf\|M_{\varphi}f\r\|_{L^{p}},$$
 which shows $\eqref{e3.2}\Rightarrow \eqref{e3.4}$.

Combining Lemma \ref{l4.7} applied for $N>1/(a_{6}\,p)$ and $L=0$ and Lemma \ref{l4.6} applied for $L=0$,
we conclude that there exist integers $M>0$, $\widetilde{M}\geq M$ large enough and positive constant $C$ such that
$$\lf\|M_{M,\,\widetilde{M}}^{0(t_{0},\,0)}f\r\|_{L^{p}}\leq C\lf\|M_{\varphi}^{(t_{0},\,0)}f\r\|_{L^{p}} \ \ \ \text{for any} \ \ f\in{\mathcal{S'}} \ \ \text{and}\ \ t_{0}< 0.$$
As $t_{0}\rightarrow {-\fz}$, by the monotone convergence theorem, we obtain $$\lf\|M_{M,\,\widetilde{M}\,}^{0}f\r\|_{L^{p}}\leq C\lf\|M_{\varphi}f\r\|_{L^{p}}.$$ From this and Proposition \ref{p2.7}, we  deduce that
$$\|f\|_{H^p(\Theta)}=\lf\|M^{0}_{N_p,\,\wz{N}_p} f\r\|_{L^{p}}\leq C \lf\|M_{M,\,\widetilde{M}\,}^{0}f\r\|_{L^{p}}\leq C\lf\|M_{\varphi}f\r\|_{L^{p}}$$
and hence
$\eqref{e3.2}\Rightarrow \eqref{e3.1}.$ It remains to show $\eqref{e3.3}\Rightarrow \eqref{e3.2}$.

Suppose now $M^0_\varphi f\in{L^p}$. For any given $t_{0}< 0$ and $K\in (0,\infty)$, let
\begin{align}\label{ef4.13}
\Omega_{t_{0}}^{K}:=\lf\{x\in \mathbb{B}_{K}:M_{M,\,\widetilde{M}}^{0\,(t_{0},\,L)}f(x)\leq C_{2} M_{\varphi}^{(t_{0},\,L)}f(x)\r\},
\end{align}
where $C_{2}:=2^{1/p}C_{1}$ with $C_{1}$ to be specified later and $\mathbb{B}_{K}$ is a ball as in Lemma \ref{l4.8}.
Combining Lemmas \ref{l4.6} and \ref{l4.7}, we know that there exist integer $M>0$ large enough and integer $\widetilde{M}\geq M$ such that
\begin{align}\label{e3.31x}
\lf\|M_{M,\,\widetilde{M}}^{0(t_{0},\,L)}f\r\|_{{L^{p}(\mathbb{B}_{K}})}\leq C_{1}\lf\|M_{\varphi}^{(t_{0},\,L)}f\r\|_{L^{p}(\mathbb{B}_{K})},
\end{align}
where constant $C_{1}$ is independent of $t_{0}< 0$.

Next, we claim that
\begin{align}\label{ef4.14}
\int_{\mathbb{B}_{K}} \lf[M_{\varphi}^{(t_{0},\,L)}f(x)\r]^{p}dx\leq 2 \int_{\Omega_{t_{0}}^{K}} \lf[M_{\varphi}^{(t_{0},\,L)}f(x)\r]^{p}dx.
\end{align}
Indeed, this follows from \eqref{e3.31x}, $M_{\varphi}^{(t_{0},\,L)}f\in L^{p}(\mathbb{B}_{K})$ and
\begin{align*}
\int_{\mathbb{B}_{K}/\Omega_{t_{0}}^{K}} \lf[M_{\varphi}^{(t_{0},\,L)}f(x)\r]^{p}dx &\leq C^{-p}_{2} \int_{\mathbb{B}_{K}/\Omega_{t_{0}}^{K}}\lf[M_{M,\,\widetilde{M}}^{0\,(t_{0},\,L)}f(x)\r]^{p}dx\\
&\leq (C_{1}/C_{2})^{p}\int_{\mathbb{B}_{K}} \lf[M_{\varphi}^{(t_{0},\,L)}f(x)\r]^{p}dx,
\end{align*}
where $(C_{1}/C_{2})^{p}=1/2$.

For given $t_{0}< 0$, let
\begin{align}\label{e4.13}
\Omega_{t_{0}}:=\lf\{x\in \mathbb{R}^{n}:M_{M}^{0(t_{0},\,L)}f(x)\leq C_{2} M_{\varphi}^{(t_{0},\,L)}f(x)\r\}.
\end{align}
Observe that the set $\Omega_{t_{0}}^{K}$ is monotonically increasing with respect to $K$ and
\begin{align}\label{ef3.36}
\lim_{K\rightarrow\infty}\Omega_{t_{0}}^{K}=\Omega_{t_{0}}.
\end{align}
By \eqref{ef4.14} and \eqref{ef3.36}, and letting $K\rightarrow\infty$, we obtain
\begin{align}\label{e4.14}
\int_{\mathbb{R}^{n}} M_{\varphi}^{(t_{0},\,L)}f(x)^{p}dx\leq 2 \int_{\Omega_{t_{0}}} M_{\varphi}^{(t_{0},\,L)}f(x)^{p}dx.
\end{align}

We also claim that for $0<q<p$ there exists a constant $C_{3}>0$ such that for any $t_{0}< 0$,
\begin{align}\label{e4.15}
M_{\varphi}^{(t_{0},\,L)}f(x)\leq C_{3}\lf[M_{\Theta}\lf(M_{\varphi}^{0\,(t_{0},\,L)}f\r)^{q}(x)\r]^{1/q},
\end{align}
where $M_{\Theta}$ is as in Definition \ref{d3.6}.
Indeed, let $t\geq t_{0}$, $y\in \theta(x,t)$ and
\begin{align*}
F_x(y,\,t):=|(f\ast\varphi_{x,\,t})(y)|\,(1+|M_{x,\,t_{0}}^{-1}\,y|)^{-L}(1+{2^{t+{t_{0}}}})^{-L}.
\end{align*}
Suppose $x\in \Omega_{t_{0}}$ and let $F_{l}^{*\, t_{0}}(x)$ be as in \eqref{e3.3x} with $l=0$. Then there exist $t'\in\mathbb{R}$ with $t'\geq t_{0}$ and $y' \in \theta(x,\,t') $ such that
\begin{align}\label{eab4.18}
F_x(y',\,t')\geq F^{*\,t_{0}}_{0}(x)/2=M_{\varphi}^{(t_{0},\,L)}f(x)/2.
\end{align}
Consider $x'\in y'+M_{x,\,t'+lJ}{(\mathbb{B}^{n})}$ for some integer $l\geq 1 $ to be specified later. Let $\Phi(z):=\varphi\lf( z+M^{-1}_{x,\,t'}(x'-y')\r)-\varphi(z)$. Obviously, we have
\begin{align}\label{ea4.18}
f\ast\varphi_{x,\,t'}(x')-f\ast\varphi_{x,\,t'}(y')=f\ast\Phi_{x,\,t'}(y').
\end{align}

Let us first estimate $\|\Phi\|_{\mathcal{S}_{M,\,\widetilde{M}}}$. From $x'\in y'+M_{x,\,t'+lJ}{(\mathbb{B}^{n})}$, we deduce that $$M^{-1}_{x,\,t'}(x'-y')\in M^{-1}_{x,\,t'}M_{x,\,t'+lJ}{(\mathbb{B}^{n})}.$$ By this and the mean value theorem, we obtain
\begin{align}\label{e3.37x}
\|\Phi\|_{\mathcal{S}_{M,\,\widetilde{M}}}&\leq \sup_{h\in M^{-1}_{x,\,t'}M_{x,\,t'+lJ}{(\mathbb{B}^{n})}}\|\varphi(\cdot+h)-\varphi(\cdot)\|_{\mathcal{S}_{M,\,\widetilde{M}}}\\
&=\sup_{h\in M^{-1}_{x,\,t'}M_{x,\,t'+lJ}{(\mathbb{B}^{n})}}\sup_{z\in {\mathbb{R}^{n}}}\sup_{|\alpha|\leq M}(1+|z|)^{\widetilde{M}}|(\partial^{\alpha}\varphi)(z+h)-\partial^{\alpha}\varphi(z)|\nonumber\\
&\leq C\sup_{h\in M^{-1}_{x,\,t'}M_{x,\,t'+lJ}{(\mathbb{B}^{n})}}\sup_{z\in {\mathbb{R}^{n}}}\sup_{|\alpha|\leq M+1}(1+|z|)^{\widetilde{M}}|(\partial^{\alpha}\varphi)(z+h)|\nonumber\\
& \ \ \ \times \sup_{h\in M^{-1}_{x,\,t'}M_{x,\,t'+lJ}{(\mathbb{B}^{n})}}|h|.\nonumber
\end{align}
From \eqref{e2.2}, we deduce
$$ \|M^{-1}_{x,\,t'}M_{x,\,t'+lJ}\|\leq a_{5}2^{-a_{6}lJ},$$
which implies
$$M^{-1}_{x,\,t'}M_{x,\,t'+lJ}(\mathbb{B}^{n})\subset a_{5}2^{-a_{6}lJ}\mathbb{B}^{n}.$$
By this and ${h\in M^{-1}_{x,\,t'}M_{x,\,t'+lJ}{(\mathbb{B}^{n})}}$, we have $|h|\leq a_{5}2^{-a_{6}lJ}$. From this and \eqref{e4.11}, we deduce that
\begin{align*}
1+|z|\leq (1+|z+h|)(1+|h|)\leq C(1+|z+h|), \ \ z\in \mathbb{R}^n
\end{align*}
Applying this and $|h|\leq a_{5}2^{-a_{6}lJ}$ in \eqref{e3.37x}, we obtain
\begin{align}\label{e3.29}
\|\Phi\|_{\mathcal{S}_{M,\,\widetilde{M}}}&\leq C \sup_{h\in M^{-1}_{x,\,t'}M_{x,\,t'+lJ}{(\mathbb{B}^{n})}}\sup_{z\in {\mathbb{R}^{n}}}\sup_{|\alpha|\leq M+1}(1+|z+h|)^{\widetilde{M}}|(\partial^{\alpha}\varphi)(z+h)|\\
&\hs\times\sup_{h\in M^{-1}_{x,\,t'}M_{x,\,t'+lJ}{(\mathbb{B}^{n})}}|h|\leq C\|\varphi\|_{\mathcal{S}_{M+1,\,\widetilde{M+1}}}a_{5}2^{-a_{6}lJ}\leq C_{4}2^{-a_{6}lJ},\nonumber
\end{align}
where positive constant $C_{4}$ doesn't depend on $L$.

Moreover, notice that for any $x'\in M_{x,\,t'+lJ}{(\mathbb{B}^{n})}+y'$, there exists $z\in \mathbb{B}^{n}$ such that $x'=M_{x,\,t'+lJ}z  +y'$. By \eqref{e4.11}, \eqref{e2.2} and $t'\geq t_{0}$, we have
\begin{align}\label{e3.30}
\lf(1+\lf|M^{-1}_{x,\,t_{0}}x'\r|\r)&\leq\lf(1+\lf|M^{-1}_{x,\,t_{0}}y'\r|\r)\lf(1+\lf\|M^{-1}_{x,\,t_{0}}M_{x,\,t'+lJ}\r\||z|\r)\\
&\leq\lf(1+\lf|M^{-1}_{x,\,t_{0}}y'\r|\r)\lf(1+a_{5}2^{-a_{6}(t'-t_{0}+lJ)}|z|\r)
\leq2a_{5}\lf(1+\lf|M^{-1}_{x,\,t_{0}}y'\r|\r)\nonumber.
\end{align}
Thus, for any $x\in\Omega_{t_{0}}$, from \eqref{ea4.18}, \eqref{e3.30}, \eqref{eab4.18}, \eqref{e3.29}, Lemma \ref{l4.9} and \eqref{e4.13}, it follows that
\begin{align*}
2^{L}a_{5}^{L}F_x(x',\,t')&=2^{L}a_{5}^{L}\lf[|(f\ast\varphi_{x,\,t'})(x')|\,(1+|M_{x,\,t_{0}}^{-1}\,x'|)^{-L}(1+2^{t'+t_{0}})^{-L}\r]\\
&\geq[|f\ast\varphi_{x,\,t'}(y')|-|f\ast\Phi_{x,\,t'}(y')|]
\lf(1+\lf|M^{-1}_{x,\,t_{0}}y'\r|\r)^{-L}\lf(1+2^{t'+t_{0}}\r)^{-L}\\
&\geq F_x(y',\,t')-M_{M,\,\widetilde{M}}^{(t_{0},\,L)}f(x)\|\Phi\|_{\mathcal{S}_{M,\,\widetilde{M}}}\\
&\geq M_{\varphi}^{(t_{0},\,L)}f(x)/2-C_{4}2^{-a_{6}lJ}C M_{M,\,\widetilde{M}}^{0\,(t_{0},\,L)}f(x)\\
&\geq M_{\varphi}^{(t_{0},\,L)}f(x)/2-C_{4}C_{2}C2^{-a_{6}lJ} M_{\varphi}^{(t_{0},\,L)}f(x).
\end{align*}
We choose integer $l\geq1$ large enough such that $C_{4}C_{2}C2^{-a_{6}lJ}\leq 1/4$. Therefore, for any $x\in\Omega_{t_{0}}$ and $x'\in M_{x,\,t'+lJ}{(\mathbb{B}^{n})}+y'$, we further have
\begin{align}\label{ea4.20}
 2^{L}a_{5}^{L}F_x(x',\,t')\geq M_{\varphi}^{(t_{0},\,L)}f(x)/2-C_{4}C_{2}C2^{-a_{6}lJ} M_{\varphi}^{(t_{0},\,L)}f(x)
\geq M_{\varphi}^{(t_{0},\,L)}f(x)/4.
\end{align}

Besides, by $y' \in \theta(x,\,t')$ and Proposition \ref{l2.2}, we have
\begin{align}\label{e3.32}
&M_{x,\,t'+lJ}{(\mathbb{B}^{n})}+y'\subseteq M_{x,\,t'+lJ}{(\mathbb{B}^{n})}+M_{x,\,t'}(\mathbb{B}^{n})+ x\\
&\quad\subseteq 2M_{x,\,t'}{(\mathbb{B}^{n})}+ x\subseteq \theta(x,\,t'-J)\nonumber.
\end{align}
Thus, for any $x\in\Omega_{t_{0}}$ and $t\geq t_{0}$, by \eqref{ea4.20} and \eqref{e3.32}, we obtain
\begin{align*}
\lf[M_{\varphi}^{(t_{0},\,L)}f(x)\r]^{q}&\leq\frac{4^{q}2^{Lq}a_{5}^{Lq}}{|M_{x,\,t'+lJ}{(\mathbb{B}^{n})}|}\int_{y'+M_{x,\,t'+lJ}{(\mathbb{B}^{n})}}[F_x(z,t')]^{q}dz\\
&\leq C4^{q}2^{Lq}a_{5}^{Lq}\frac{2^{(l+1)J}}{\lf|\theta(x,\,t'-J)\r|}\int_{\theta(x,\,t'-J)}\lf[M_{\varphi}^{0\,(t_{0},\,L)}f(z)\r]^{q}dz\\
&\leq C_{3}M_{\Theta}\lf(\lf(M_{\varphi}^{0\,(t_{0},\,L)}f\r)^{q}\r)(x),
\end{align*}
which shows the above claim \eqref{e4.15}.

Consequently, by \eqref{e4.14}, \eqref{e4.15} and Proposition \ref{l4.3} with $p/q>1$, we have
\begin{align}\label{e4.16}
\int_{\mathbb{R}^{n}} \lf[M_{\varphi}^{(t_{0},\,L)}f(x)\r]^{p}dx&\leq 2 \int_{\Omega_{t_{0}}} \lf[M_{\varphi}^{(t_{0},\,L)}f(x)\r]^{p}dx\\
&\leq 2{C_{3}}^{p} \int_{\Omega_{t_{0}}}\lf[M_{\Theta}\lf(\lf(M_{\varphi}^{0(t_{0},\,L)}f\r)^{q}\r)(x)\r]^{p/q}dx\nonumber\\
&\leq C_{5} \int_{\mathbb{R}^{n}} \lf[M_{\varphi}^{0(t_{0},\,L)}f(x)\r]^{p}dx\nonumber,
\end{align}
where the constant $C_{5}$ depends on $p/q>1$, $L\geq0$ and $p(\Theta)$, but is independent of $t_{0}<0$. This inequality is crucial as it gives a bound of the non-tangential by the radial maximal function in $L^{p}$. The rest of the proof is immediate.

For any $x\in\mathbb{R}^{n}$, $y\in\mathbb{R}^{n}$ and $t<0$, by \eqref{e2.2}, we obtain
\begin{align*}
\lf|M_{x,\,t}^{-1}y\r|&=\lf|M_{x,\,t}^{-1}M_{x,\,0}M_{x,\,0}^{-1}y\r|\leq\lf\|M_{x,\,t}^{-1}M_{x,\,0}\r\|\lf\|M_{x,\,0}^{-1}\r\||y|\\
&\leq a_{5}2^{a_{6}t}\lf\|M_{x,\,0}^{-1}\r\||y|\rightarrow 0 \ \  \text{as} \ \ t\rightarrow -\fz.\nonumber
\end{align*}
Hence, we obtain $M_{\varphi}^{(t_{0},\,L)}f(x)$ converges pointwise and monotonically to $M_{\varphi}f(x)$ for all $x\in{\mathbb{R}^{n}}$ as $t_{0}\rightarrow{-\fz}$, which, together with \eqref{e4.16} and the monotone convergence theorem, further implies that $M_{\varphi}f\in L^{p}$. Therefore, we can now choose $L=0$ and again by \eqref{e4.16}  and the monotone convergence theorem, we have $\|M_{\varphi}f\|^{p}_{p}\leq C_{5}\|M^{0}_{\varphi}f\|^{p}_{p}$, where $C_{5}$ corresponds to $L=0$ and is independent
of $f\in\mathcal{S'}$. This finishes the proof of Theorem \ref{t4.1}.
\end{proof}

\bigskip

\noindent Aiting Wang, Wenhua Wang, Xinping Wang and Baode Li (Corresponding author)

\medskip

\noindent College of Mathematics and System Science, Xinjiang University, Urumqi, 830046,
P. R. China
\smallskip

\noindent{\it E-mail address}: \texttt{2358063796@qq.com}\quad(Aiting Wang)

\hspace{2.32cm}\texttt{1663434886@qq.com}\quad(Wenhua Wang)

\hspace{2.32cm}\texttt{569536403@qq.com}\quad(Xinping Wang)

\hspace{2.32cm}\texttt{baodeli@xju.edu.cn}\quad(Baode Li)


\begin{thebibliography}{30}
\vspace{-0.3cm}
\bibitem{bb}
B. Barrios, J. Betancor, Anisotropic weak Hardy spaces and wavelets, J.
Funct. Spaces Appl. Article ID 809121, {\bf 17} (2012).

\vspace{-0.3cm}
\bibitem{bd}J. Betancor,  W. Dami\'{a}n, Anisotropic local Hardy spaces, J. Fourier Anal. Appl. {\bf 16} (2010), 658-675.

\vspace{-0.3cm}
\bibitem{b} M. Bownik, Anisotropic Hardy spaces and
wavelets, Mem. Amer. Math. Soc. {\bf 164}, no. 781 (2003), 1-122.

\vspace{-0.3cm}
\bibitem{bll}
M. Bownik, B. Li, and J. Li, Variable anisotropic singular integral operators, arXiv: 2004.09707v2.

\vspace{-0.3cm}
\bibitem{blyz} M. Bownik, B. Li, D. Yang, Y. Zhou,
Weighted anisotropic Hardy spaces and their applications in
boundedness of sublinear operators, Indiana Univ. Math. J. {\bf 57}
(2008), 3065-3100.

\vspace{-0.3cm}
\bibitem{CT}
A.-P. Calder\'on, A. Torchinsky, Parabolic maximal functions associated with a distribution, Adv. Math. {\bf 16} (1975), 1--64.

\vspace{-0.3cm}
\bibitem{ct} A.-P. Calder$\acute{\text{o}}$n, A. Torchinsky, Parabolic maximal functons associaed with a distribution,
Adv. Math. {\bf 25} (1977), 216-225.

\vspace{-0.3cm}
\bibitem{ddp} W. Dahmen, S. Dekel, P. Petrushev, Two-level-split decomposition of anisotropic Besov spaces, Constr. Approx. {\bf 31} (2010), 149-194.

\vspace{-0.3cm}
\bibitem{dpw} S. Dekel, P. Petrushev, T. Weissblat, Hardy spaces on $\rn$ with pointwise variable anisotropy, J. Fourier Anal. Appl. {\bf 17} (2011), 1066-1107.

\vspace{-0.3cm}
\bibitem{fe}
C. Fefferman, E. Stein, $H^{p}$ spaces of several variables, Acta Math. {\bf 129} (1972), 137-193.

\vspace{-0.3cm}
\bibitem{h} G. Hu, Littlewood-Paley characterization of weighted anisotropic Hardy
spaces, Taiwan. J. Math. {\bf 17} (2013), 675-700.

\vspace{-0.3cm}
\bibitem{s} E. Stein, Harmonic Analysis: Real-variable Methods, Orthogonality, and Oscillatintegrals, Princeton Mathematical Series, no. 43, Princeton University Press, Princeton, N.J., 1993.

 \vspace{-0.3cm}
\bibitem{u} A. Uchiyama, A maximal function characterization of $H^{p}$ on the space of homogeneous type, Trans. Amer. Math. Soc. {\bf 262} (1980), 579-592.

\vspace{-0.3cm}
\bibitem{w} L.-A. Wang, Multiplier theorems on anisotropic Hardy spaces, ProQuest
LLC, Ann Arbor, MI. Thesis (Ph.D.), University of Oregon (2012).

\vspace{-0.3cm}
\bibitem{zl} K. Zhao, L. Li, Molecular decomposition of weighted anisotropic Hardy
spaces, Taiwan. J. Math. {\bf 17} (2013), 583-599.

\end{thebibliography}
\end{document}